\documentclass{amsart}

\usepackage{lineno}
\usepackage{amssymb}
\usepackage{amsmath}
\usepackage{amsthm}
\usepackage{mathrsfs}
\usepackage{bbm}
\usepackage{cite}
\usepackage{color}
\usepackage{enumitem}
\usepackage{pgfplots}

\usepackage[colorlinks,hypertexnames=false]{hyperref}
\usepackage[msc-links]{amsrefs}

\usepackage{changes}

\usepackage{mathtools}
\usepackage{extarrows}
\usepackage{setspace}

\usepackage{graphicx}

\usepackage{imakeidx}
\makeindex

\hypersetup{
	colorlinks=true,
	linkcolor=black,
	filecolor=black,
	urlcolor=black,
	citecolor=black,
}

\newtheorem{thm}{Theorem}[section]

\newtheorem{defn}[thm]{Definition}
\newtheorem{lem}[thm]{Lemma}
\newtheorem{prop}[thm]{Proposition}

\newtheorem{corollary}[thm]{Corollary}

\newtheorem{remark}[thm]{Remark}
\newtheorem{example}[thm]{Example}

\numberwithin{equation}{section}

\pgfplotsset{compat=1.18} 

\begin{document}

\title[SFR functions on arbitrary domains]{Two-sided slice regular functions}
\author{Xinyuan Dou}
\email[Xinyuan Dou]{douxinyuan@ustc.edu.cn}
\address{Department of Mathematics, University of Science and Technology of China, Hefei 230026, China}
\author{Haosen Liu}
\email[Haosen Liu]{2949380535@qq.com}
\address{Department of Mathematics, University of Science and Technology of China, Hefei 230026, China}  
\author{Ting Yang*}
\email[Ting Yang (The corresponding author)]{tingy@mail.ustc.edu.cn}
\address{School of Mathematics and Statistics, Anqing Normal University, Anqing, 246133, China}

%\date{\today}
\keywords{Two-sided slice regular functions; Star-product; Cauchy integral formula; Axially symmetric domains}
\thanks{This work was supported by Xiaomi Young Talents Program and by the NNSF of China (12401104). }

\subjclass[2020]{Primary 30G35; Secondary 32A30}

\begin{abstract}

In this paper, we introduce two types of two-sided slice regular functions, which are left slice regular in the first variable and right slice regular in the second variable. We prove that these two classes of functions coincide on axially symmetric domains.
We further investigate fundamental properties of two-sided slice regular functions and define a star-product that preserves this regularity. In addition, we establish the Cauchy integral formula for these functions. 
\end{abstract}

\maketitle
\section{Introduction}
The theory of slice regular functions, originally referred to as Cullen-regular functions, was introduced in 2006 by G. Gentili and D. C. Struppa \cites{GentiliStruppa2006,gentili2007new4}. Since then, slice regularity has been extended to a variety of algebraic settings, including Clifford algebras, octonions, real alternative algebras, and several-variable frameworks \cites{Colombo2009SliceMonogenic,gentili2010regular5,colombo2007functional,ghiloni2011slice,GhiloniPerotti2012Clifford,ColomboSabadiniStruppa2012,RenYang2020,GhiloniPerotti2022}. While the classical theory was primarily developed on axially symmetric slice domains, more recent studies have investigated slice regularity on more general classes of domains \cites{Dou202000426,GhiloniStoppato2024,Perotti2024}.

The theory of slice regular functions provides a natural framework for extending complex analysis to the quaternionic setting. Its development has also led to a corresponding spectral theory for quaternionic linear operators. In contrast to the complex case, the non-commutativity of quaternionic multiplication prevents a direct extension of the classical resolvent and spectrum. To overcome this difficulty, Colombo and Sabadini introduced the notion of the S-spectrum  \cite{colombo2007functional}. For a bounded right-linear operator $T$ on a two-sided quaternionic Banach space, the $S$-spectrum is defined by
$$
\sigma_S(T)
=
\left\{
s\in\mathbb H:
T^2-2\operatorname{Re}(s)T+|s|^2\mathcal I
\ \text{is not invertible}
\right\}.
$$
The associated left and right $S$-resolvent operators are given, for $s\notin\sigma_S(T)$, by
$$
S_L^{-1}(s,T)
=
-\left(
T^2-2\operatorname{Re}(s)T+|s|^2\mathcal I
\right)^{-1}(T-\overline{s}\mathcal I)
$$
and
$$
S_R^{-1}(s,T)
=
-(T-\overline{s}\mathcal I)
\left(
T^2-2\operatorname{Re}(s)T+|s|^2\mathcal I
\right)^{-1}.
$$
These operators provide the basic kernels of the $S$-functional calculus and play the role of the classical resolvent in quaternionic spectral theory \cites{ColomboSabadini2009,ColomboSabadini2010lr,colombo2018spectral,ColomboGentiliSabadiniStruppa2010}.

At the scalar level, the corresponding objects are the non-commutative Cauchy kernels, also known as slice hyperholomorphic Cauchy kernels. They may be regarded as the scalar counterparts of the $S$-resolvent operators and provide the analytic kernels underlying the $S$-functional calculus. More precisely, for $s,q\in\mathbb H$ with $q\notin[s]$, the left and right Cauchy kernels are defined, respectively, by
\begin{equation}\label{S-LR}
S_L^{-1}(s,q)=
-\left(q^2-2\operatorname{Re}(s)q+|s|^2\right)^{-1}(q-\overline{s}),
\end{equation}
and
\begin{equation}\label{S-RR}
S_R^{-1}(s,q)=
-(q-\overline{s})
\left(q^2-2\operatorname{Re}(s)q+|s|^2\right)^{-1}.
\end{equation}
These kernels yield Cauchy-type integral representations for slice regular functions and, upon replacing the quaternionic variable $q$ by an operator $T$, give rise to the corresponding $S$-resolvent operators. Thus, they provide a natural bridge between slice regular function theory and quaternionic operator theory.

An important feature of these kernels is their intrinsic two-sided structure. More precisely,
\begin{enumerate}
\item[(i)] $S_L^{-1}(s,q)$ is right slice regular with respect to the first variable and left slice regular with respect to the second variable;
\item[(ii)] $S_R^{-1}(s,q)$ is left slice regular with respect to the first variable and right slice regular with respect to the second variable.
\end{enumerate}
Moreover, they satisfy the symmetry relation
$$
S_L^{-1}(s,q)=-S_R^{-1}(q,s).
$$
This two-sided structure naturally suggests the study of functions of two quaternionic variables in which left and right slice regularity are considered simultaneously.

In this paper, we study functions of two quaternionic variables that are left slice regular in the first variable and right slice regular in the second variable. We characterize this class of functions in two ways: first, in terms of the holomorphicity of their restrictions to complex slices, and second, in terms of associated holomorphic stem functions. We establish several analytic properties of these functions and prove the equivalence of these two characterizations on axially symmetric domains. Furthermore, we introduce a corresponding star-product for this class of functions and establish a Cauchy integral formula for two-sided slice regular functions.

Reference \cite{GhiloniPerotti2022} develops a theory of slice regular functions of several variables over real alternative \(*\)-algebras based on stem and slice functions, where regularity is characterized indirectly through stem functions. In contrast, we mainly study weak slice regular functions, whose restrictions to complex slices are holomorphic. Since the theory of weak slice regular functions necessarily requires the framework of slice-cones \cite{Dou202000425}, in which additional technical difficulties arise, we only undertake a preliminary investigation of the simpler cases in this paper.
\section{Two-sided slice regular functions}
In this section, we introduce the notion of two-sided slice regular functions on axially symmetric domains and establish several fundamental results, including a uniqueness theorem and a representation formula. The latter shows that a two-sided slice regular function can be recovered from its values on an arbitrary full slice.

\begin{defn}
    The sphere $\mathbb S$ of imaginary units of quaternions $\mathbb H$ is defined by
    $$\mathbb S = \{q \in \mathbb  H : q^2 = -1\}.$$
\end{defn}

\begin{defn}
    A subset $\Omega $ of $\mathbb{H}$ is called slice-open, if $\Omega_I:=\Omega \cap \mathbb{C}_I$ is open in $\mathbb{C}_I$  for any $I\in \mathbb{S}$.
\end{defn}

\begin{defn}
    The slice topology is defined by
    $$ \tau_s(\mathbb{H}):=\{ \Omega\subset \mathbb{H}:\Omega\ \text{is slice-open} \}.$$
\end{defn}

\begin{defn}\label{weak regular}
    Let $\Omega_1,\Omega_2\in \tau_s(\mathbb{H}),$ let $f$ be a quaternion-valued function defined on $\Omega_1\times\Omega_2$. For each $I,J\in \mathbb{S},$ let $\Omega_{1,I}:=\Omega_1\cap \mathbb{C}_I,$ and $ \Omega_{2,J}:=\Omega_2\cap \mathbb{C}_J$ and $f|_{I,J}=f|_{\Omega_{1,I}\times\Omega_{2,J}}.$ If $f|_{I,J}$ has continuous partial derivatives and satisfies
   \begin{equation} \label{tsr}
   \left\{\begin{array}{cc}
      \frac{1}{2}\left(\frac{\partial}{\partial x}
  f(p,q)+I\frac{\partial}{\partial y}f(p,q)\right)=0\\ \\
       \frac{1}{2}\left(\frac{\partial}{\partial u}f(p,q)
  +\frac{\partial}{\partial v}f(p,q)J\right)=0
    \end{array}\right.
\end{equation}
for any $(p,q)=(x+Iy,u+Jv)\in \Omega_1\times \Omega_2$ with  $I,J\in \mathbb{S},$ then the function $f$ is called a $($two-sided$)$ slice regular function on $\Omega_1\times \Omega_2.$
Denote the class of two-sided slice regular functions on $\Omega_1\times \Omega_2$ by $\mathcal{SR}(\Omega_1\times \Omega_2).$
\end{defn}

\begin{remark}
The formula (\ref{tsr}) is also denoted as 
\begin{equation*}
   \left\{\begin{array}{cc}
      \frac{1}{2}\left(\frac{\partial}{\partial x}
    +I\frac{\partial}{\partial y}\right)f(p,q)=0\\ \\
       \frac{1}{2}\left(\frac{\partial}{\partial u}
  +\frac{\partial}{\partial v}R_J\right)f(p,q)=0,
    \end{array}\right.
\end{equation*}
where $R_J:\mathbb H \rightarrow \mathbb H$ denotes the right multiplication operator given by 
$$R_J(a)=aJ,\quad a\in \mathbb H.$$
\end{remark}

\begin{thm}
    Let $\Omega_1,\Omega_2$ be domains in $(\mathbb{H},\tau_s),$ and $f, g \in \mathcal{SR}(\Omega_1\times \Omega_2)$ with $f=g$ on  $\Omega_{1,\mathbb{R}}\times  \Omega_{2,\mathbb{R}}\neq \emptyset,$  where $\Omega_{\ell,\mathbb{R}}=\Omega_{\ell}\cap \mathbb{R}\ (\ell=1,2).$  Then $f=g$ on $\Omega_1\times \Omega_2.$  
\end{thm}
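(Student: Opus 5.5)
Set $h:=f-g$. Since $\mathcal{SR}(\Omega_1\times\Omega_2)$ is a real vector space, $h\in\mathcal{SR}(\Omega_1\times\Omega_2)$ and $h=0$ on $\Omega_{1,\mathbb R}\times\Omega_{2,\mathbb R}$. The plan is to separate the variables and use a one-variable identity principle twice, each time in the slice topology:
\begin{enumerate}
\item For every fixed real $q_0\in\Omega_{2,\mathbb R}$, show that $p\mapsto h(p,q_0)$ vanishes on all of $\Omega_1$.
\item Then, for every fixed $p\in\Omega_1$, show that $q\mapsto h(p,q)$ vanishes on all of $\Omega_2$.
\end{enumerate}

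\emph{Reduction to holomorphic functions on each slice.} Fix $I\in\mathbb S$ and choose $K\in\mathbb S$ with $K\perp I$. Then $\mathbb H=\mathbb C_I\oplus\mathbb C_I K$, and left multiplication by $I$ preserves both summands. Writing $h(\cdot,q_0)|_{\Omega_{1,I}}=h_1+h_2K$ with $h_1,h_2$ taking values in $\mathbb C_I$, the first equation of (\ref{tsr}) says exactly that $h_1,h_2$ are holomorphic on the open set $\Omega_{1,I}\subset\mathbb C_I$. So the classical identity theorem applies slicewise.

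For the second variable, fix $J$ and use the splitting $\mathbb H=\mathbb C_J\oplus K\mathbb C_J$, which is preserved by $R_J$. The second equation of (\ref{tsr}) then gives holomorphy of the two components of $q\mapsto h(p,q)$ on $\Omega_{2,J}$. Note that Definition \ref{weak regular} requires the equations at every point of $\Omega_1\times\Omega_2$. Hence for fixed $q$ (real or not) the first equation holds in $p$, and for fixed $p$ the second holds in $q$.

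\emph{Open--closed argument in $\tau_s$ (Step 1).} Fix $q_0$ and put $\varphi=h(\cdot,q_0)$. Let $U$ be the set of $p\in\Omega_1$ such that $\varphi$ vanishes on a neighbourhood of $p$ in $\Omega_{1,I}$ for every $I$ with $p\in\mathbb C_I$. The proof then has three parts.
\begin{itemize}
\item[(a)] \emph{$U\neq\emptyset$.} Take $x\in\Omega_{1,\mathbb R}$. Since $\Omega_{1,I}$ is open, some real interval around $x$ lies in $\Omega_{1,\mathbb R}$, and $\varphi$ vanishes there. By the identity theorem, $\varphi$ vanishes on a disc around $x$ in every $\mathbb C_I$, so $x\in U$.
\item[(b)] \emph{$U$ is slice-open.} This is immediate from the definition.
\item[(c)] \emph{$\Omega_1\setminus U$ is slice-open.} Let $p$ lie in the $\tau_s$-closure of $U$, and distinguish two cases.
\begin{itemize}
\item[(c1)] If $p\notin\mathbb R$, then $p$ lies in a single slice $\mathbb C_I$. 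A small disc $D\subset\Omega_{1,I}$ around $p$ avoiding $\mathbb R$ is slice-open, since it meets every other $\mathbb C_J$ only inside $\mathbb R$. So $D\cap U\neq\emptyset$, $\varphi$ vanishes on an open subset of $D$, and hence on all of $D$.
\item[(c2)] If $p\in\mathbb R$, take the union of discs around $p$ in all slices, chosen inside $\Omega_1$; this set is slice-open. It meets $U$ in some slice $\mathbb C_K$, so $\varphi$ vanishes on the disc in $\mathbb C_K$. Hence it vanishes on a real interval around $p$, and therefore on the disc around $p$ in every $\mathbb C_I$.
\end{itemize}
\end{itemize}
Since $\Omega_1$ is connected in $\tau_s$, it follows that $U=\Omega_1$.

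\emph{Step 2 and the main obstacle.} Step 1 yields $h=0$ on $\Omega_1\times\Omega_{2,\mathbb R}$. For each fixed $p\in\Omega_1$, the function $q\mapsto h(p,q)$ is right-holomorphic on every slice and vanishes on $\Omega_{2,\mathbb R}\neq\emptyset$. The same open--closed argument in $(\Omega_2,\tau_s)$, now with the $R_J$-splitting, gives $h(p,\cdot)\equiv0$, hence $f=g$.

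The only delicate point is the closedness part (c) at real points. There the argument must transfer vanishing from one slice to all others through the real axis, and it must use that $\tau_s$-neighbourhoods of a real point are unions of discs in arbitrarily many slices. This is precisely where slice-connectedness, rather than Euclidean connectedness, is used. The nonemptiness hypothesis $\Omega_{1,\mathbb R}\times\Omega_{2,\mathbb R}\neq\emptyset$ enters only to start the argument in (a) and in Step 2.
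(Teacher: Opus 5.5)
Your overall strategy matches the paper's: separate the variables and apply a one-variable identity principle twice. The paper fixes a real $x\in\Omega_{1,\mathbb R}$ and extends in $q$ first, then extends in $p$; your reversed order makes no difference. The real difference is that the paper simply cites the identity principle for one-variable slice regular functions on domains of $(\mathbb H,\tau_s)$. You re-prove it, and the re-proof breaks at exactly the step you call delicate.

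In (c2) you claim that a union of discs $D_J\subset\Omega_{1,J}$ centred at the real point $p$, one per slice, is slice-open. With slice-dependent radii $r_J$ this is false. The set $W\cap\mathbb C_J$ is the disc of radius $\max(r_J,r_{-J})$ together with the real interval of radius $R=\sup_{J'}r_{J'}$ around $p$. This set is not open in $\mathbb C_J$ once $\max(r_J,r_{-J})<R$, because real points at distance in $[\max(r_J,r_{-J}),R)$ from $p$ have no $\mathbb C_J$-neighbourhood inside it.

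A uniform radius is not available in general. Take $\Omega_1=\mathbb H\setminus\{\tfrac1n J_n:n\ge1\}$ with $J_n\neq\pm J_m$ for $n\neq m$. This is a domain in $\tau_s$ containing $\mathbb R$. At $p=0$ the disc in $\mathbb C_{J_n}$ must have radius at most $1/n$, so no union of centred discs is a $\tau_s$-neighbourhood of $0$. This is precisely the non-Euclidean situation the slice topology is designed to handle.

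The repair is as follows.
\begin{itemize}
\item Choose $\varepsilon>0$ with $(p-\varepsilon,p+\varepsilon)\subset\Omega_{1,\mathbb R}$.
\item Let $V_J$ be the connected component of $\Omega_{1,J}\cap\{z\in\mathbb C_J:|\operatorname{Re} z-p|<\varepsilon\}$ containing $p$.
\item Each $V_J$ is open and connected, $V_J=V_{-J}$, and $V_J\cap\mathbb R=(p-\varepsilon,p+\varepsilon)$ for every $J$.
\item Hence $W:=\bigcup_J V_J$ satisfies $W\cap\mathbb C_J=V_J$, so $W$ is slice-open.
\end{itemize}
The rest of (c2) then goes through with $V_K$ in place of ``the disc in $\mathbb C_K$''.

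The same slice-dependence of radii means (b) is not quite ``immediate from the definition'' at real points of $U$. For a real $p'$ close to a real $p\in U$, you need the identity theorem along the real interval on which $\varphi$ vanishes, not the definition of $U$ alone. Alternatively, you can cite the one-variable identity principle on slice domains, as the paper does, which makes the whole open--closed argument unnecessary.
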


\begin{proof}
    Let $f,\ g:\Omega_{1}\times  \Omega_{2}\rightarrow \mathbb{H}$ be the two-sided slice regular functions. For any fixed $x\in \Omega_{1,\mathbb{R}},$ then the functions $f(x,q)$ and $g(x,q)$ are both right slice regular on $\Omega_2$ for all $q\in \Omega_2.$ Since $f(p,q)$ and $g(p,q)$ coincide on $\Omega_{1,\mathbb{R}}\times  \Omega_{2,\mathbb{R}},$  by the Identity principle of slice regular functions of one variable,
    \begin{equation}\label{2.2}
        f(x,q)=g(x,q)\quad \text{on}\ \Omega_{1,\mathbb R}\times \Omega_2.
    \end{equation}
For any fixed $q\in \Omega_{2},$ then the functions $f(p,q)$ and $g(p,q)$ are both left slice regular on $\Omega_1$ for all $p\in \Omega_1.$ According to formula (\ref{2.2}) and the Identity principle of slice regular functions of one variable,
    \begin{equation*}
        f(p,q)=g(p,q)\quad \text{for\ all }\ p\in \Omega_{1}.
    \end{equation*}
It follows that $f=g$ on $\Omega_1\times \Omega_2.$    
\end{proof}
\begin{defn}
   A set $U \subset \mathbb H$ is called {\bf axially symmetric} if $$U=\bigcup_{x+yI\in U}x+y\mathbb{S}.$$
\end{defn}
For convenience, denote
$$ \mathbb S^2_*:=\{(I_1,I_2)\in \mathbb S^2\ \ | \ I_1\neq I_2\}.$$
\begin{thm}\label{simple}$($Representation formula$)$
     Let $\Omega_1,\Omega_2$ be axially symmetric domains in $(\mathbb{H},\tau_s),$ and $f\in \mathcal{SR}(\Omega_{1}\times  \Omega_{2}).$ Let
$(p,q)=(x+Iy,u+Jv)\in \Omega_1\times \Omega_2.$ For all $(K_1,K_2),(L_1,L_2)\in \mathbb{S}^2_*,$ we have 
\begin{equation}\label{representation formula}
    f(p,q) =
  (1,I)\,A_K^{-1} M_fB_L^{-1}
  \binom{1}{J},
\end{equation}
where $$A_K:=\begin{pmatrix}
            1 & K_1\\ 1 & K_2
        \end{pmatrix}\quad \text{and}\quad B_L:=\begin{pmatrix}
            1 & 1\\ L_1& L_2
        \end{pmatrix},$$ and 
\[
  M_f:=
  \begin{pmatrix}
    f(x+yK_1,u+vL_1) & f(x+yK_1,u+vL_2)\\
    f(x+yK_2,u+vL_1) & f(x+yK_2,u+vL_2)
  \end{pmatrix}.
\]
\end{thm}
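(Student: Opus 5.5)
The plan is to apply the one-variable representation formula twice, once in each variable, and then check that the two steps assemble into the matrix product in \eqref{representation formula}.

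\textbf{Step 1 (second variable).} Fix $p=x+Iy\in\Omega_1$. By Definition~\ref{weak regular}, the map $q\mapsto f(p,q)$ is right slice regular on the axially symmetric slice domain $\Omega_2$. For right slice regular $g$ and $L_1\neq L_2$, the classical right representation formula reads
\[
g(u+Jv)=\bigl(g(u+vL_1),\,g(u+vL_2)\bigr)\,B_L^{-1}\binom{1}{J}.
\]
I would derive it rather than quote it. Right regularity on the slice $\mathbb C_L$ means that $v\mapsto g(u+vL)$ has the form $\alpha(u,v)+\beta(u,v)L$, with $\alpha,\beta$ independent of $L$ by the one-variable theory, where $\alpha$ is even in $v$ and $\beta$ odd. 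This gives
\[
(g(u+vL_1),g(u+vL_2))=(\alpha,\beta)B_L .
\]
Hence $(\alpha,\beta)=(g_1,g_2)B_L^{-1}$, and $g(u+Jv)=(\alpha,\beta)\binom{1}{J}$. The matrix $B_L$ is invertible because $L_1\neq L_2$; one can check that $L_2-L_1$ is invertible and write $B_L^{-1}$ explicitly, taking care with the noncommutativity of the entries. Applying this with $p=x+yK_i$ gives, for $i=1,2$,
\[
f(x+yK_i,q)=\bigl(f(x+yK_i,u+vL_1),\,f(x+yK_i,u+vL_2)\bigr)B_L^{-1}\binom{1}{J}.
\]

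\textbf{Step 2 (first variable).} Fix $q$. The map $p\mapsto f(p,q)$ is left slice regular on $\Omega_1$, so the left representation formula gives
\[
f(x+Iy,q)=(1,I)A_K^{-1}\binom{f(x+yK_1,q)}{f(x+yK_2,q)} .
\]
This follows as above, from $f(x+yK,q)=a+Kb$ and $\binom{f_1}{f_2}=A_K\binom{a}{b}$. Substituting the two identities from Step 1 into the column vector produces exactly $M_fB_L^{-1}\binom{1}{J}$. Associativity of matrix multiplication over $\mathbb H$ then yields $(1,I)A_K^{-1}M_fB_L^{-1}\binom{1}{J}$.

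\textbf{Main obstacle.} The hard part is justifying the one-variable representation formulas in this setting. The hypothesis only gives domains in the slice topology, which need not be slice domains in the classical sense; they may fail to meet $\mathbb R$, which would make them product domains. So I must ensure the splitting $g(u+vL)=\alpha+\beta L$ with $L$-independent $\alpha,\beta$ holds on axially symmetric slice-open domains.

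My first attempt would be to cite the known representation formula for axially symmetric domains in $\tau_s$, from the references on slice regularity on general domains. If that is not directly applicable, I would define
\[
\alpha=\tfrac12\bigl(g(u+vL)+g(u-vL)\bigr),
\]
with the analogous expression for $\beta$, and show independence of $L$. The route is to observe that the differences are holomorphic on each slice and vanish on $\mathbb R$ when the domain meets $\mathbb R$. In the non-real case one needs the general-domain theory instead.
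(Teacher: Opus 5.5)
Your proof is correct and is essentially the paper's. Both apply the one-variable right representation formula in $q$ and the left one in $p$, then multiply out. The paper expands the row $\bigl(f(p,u+vL_1),f(p,u+vL_2)\bigr)$ by the left formula, while you expand the column $\bigl(f(x+yK_1,q),f(x+yK_2,q)\bigr)^T$ by the right formula; the difference is immaterial.

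The paper simply cites the one-variable formula, so your attempt to justify it is welcome. However, the worry in your last paragraph is unfounded. Suppose an axially symmetric slice-open $\Omega$ missed $\mathbb{R}$. Then the distinct sets $\Omega\cap\mathbb{C}_I$ would be pairwise disjoint, nonempty and slice-open, so $\Omega$ could not be $\tau_s$-connected. The same clopen argument shows that every connected component of every $\Omega_I$ meets $\mathbb{R}$. That is exactly what your identity-principle proof of the $L$-independence of $\alpha,\beta$ requires.

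The fallback you mention would also not work. On the product domain $\mathbb{H}\setminus\mathbb{R}$, take a function equal to $c(I)$ on each half-plane $\{x+yI:y>0\}$, with $c:\mathbb{S}\to\mathbb{R}$ nonconstant. It is holomorphic on every slice but not of the form $a+Ib$, so the formula genuinely fails there. It is the $\tau_s$-connectedness hypothesis that rules this case out.
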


\begin{proof}
    Let
$(p,q)=(x+Iy,u+Jv)\in \Omega_1\times \Omega_2.$ For any fixed $p\in \Omega_1,\ f(p,q)$ is right slice regular with respect to $q$ on $\Omega_2$.
By the representation formula of one variable, for any $(L_1,L_2)\in \mathbb S_*^2,$ we have
\begin{equation}\label{r0}
    f(p,u+Jv) =
  \begin{pmatrix}
    f(p,u+L_1v) & f(p,u+L_2v)\\
  \end{pmatrix}\ 
  B_L^{-1}
  \binom{1}{J}.
\end{equation}
Since  $f(p,q)$ is left slice regular with respect to $p$ on $\Omega_1$, by the representation formula of one variable, for fixed $q\in \Omega_2$, we have
 $$f(x+Iy,q) =(1,I)A_K^{-1}
  \begin{pmatrix}
    f(x+K_1y,q) \\ f(x+K_2y,q)
  \end{pmatrix},\quad \forall (K_1,K_2)\in \mathbb S_*^2.$$
 It follows that 
 \begin{equation}\label{r1}
     f(x+Iy,u+L_1v)=(1,I)A_K^{-1}
  \begin{pmatrix}
    f(x+K_1y,u+L_1v) \\ f(x+K_2y,u+L_1v)
  \end{pmatrix}
 \end{equation} 
and
  \begin{equation}\label{r2}
     f(x+Iy,u+L_2v)=(1,I)A_K^{-1}
  \begin{pmatrix}
    f(x+K_1y,u+L_2v) \\ f(x+K_2y,u+L_2v)
  \end{pmatrix}.
 \end{equation}
Combining formulas (\ref{r0}),(\ref{r1}) and (\ref{r2}), the representation formula (\ref{representation formula}) is obtained.
\end{proof}

According to Theorem \ref{simple}, when $K_2=-K_1$ and $L_2=-L_1$, we can get following result.
\begin{corollary}\label{representation formula simple}
     Let $\Omega_1,\Omega_2$ be axially symmetric domains in $(\mathbb{H},\tau_s),$ and $f\in \mathcal{SR}(\Omega_{1}\times  \Omega_{2}).$ Let
$(p,q)=(x+Iy,u+Jv)\in \Omega_1\times \Omega_2.$ For all $K_1,L_1\in \mathbb{S},$  we have 
\begin{equation*}
    \begin{split}
        f(p,q)&=\frac{1}{4}\left[f(z^{K_1},w^{L_1})+f(z^{K_1},w^{-{L_1}})+f(z^{-{K_1}},w^{L_1})+f(z^{-{K_1}},w^{-{L_1}})\right]\\
        &-\frac{ I{K_1}}{4}\left[f(z^{K_1},w^{L_1})+f(z^{K_1},w^{-{L_1}})-f(z^{-{K_1}},w^{L_1})-f(z^{-{K_1}},w^{-{L_1}})\right]\\
        &-\frac{1}{4}\left[f(z^{K_1},w^{L_1})-f(z^{K_1},w^{-{L_1}})+f(z^{-{K_1}},w^{L_1})-f(z^{-{K_1}},w^{-{L_1}})\right]{L_1}J\\
        &+\frac{IK_1}{4}\left[f(z^{K_1},w^{L_1})-f(z^{K_1},w^{-{L_1}})-f(z^{-{K_1}},w^{L_1})+f(z^{-{K_1}},w^{-{L_1}})\right]{L_1}J,
    \end{split}
\end{equation*}
where $z^{\pm K_1}=x\pm K_1y$ and $w^{\pm L_1}=u\pm L_1v.$
\end{corollary}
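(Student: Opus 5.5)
We specialize Theorem \ref{simple} to $K_2=-K_1$ and $L_2=-L_1$; this is admissible because $(K_1,-K_1),(L_1,-L_1)\in\mathbb S^2_*$, as $K_1\neq -K_1$ for $K_1\in\mathbb S$. With this choice the entries of $M_f$ are $f(z^{\pm K_1},w^{\pm L_1})$. It remains to compute the two matrix inverses and expand the product $(1,I)A_K^{-1}M_fB_L^{-1}\binom{1}{J}$.

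First we invert $A_K=\begin{pmatrix}1&K_1\\1&-K_1\end{pmatrix}$. Using $K_1^{-1}=-K_1$, we propose
\[
A_K^{-1}=\frac12\begin{pmatrix}1&1\\-K_1&K_1\end{pmatrix},
\]
and we check it by direct multiplication: $\tfrac12\bigl(1\cdot1+1\cdot 1\bigr)=1$, $\tfrac12(K_1-K_1)=0$, $\tfrac12(-K_1+K_1)=0$, and $\tfrac12(-K_1K_1-K_1K_1)=1$. Hence $(1,I)A_K^{-1}=\tfrac12(1-IK_1,\;1+IK_1)$. Similarly, for $B_L=\begin{pmatrix}1&1\\L_1&-L_1\end{pmatrix}$ we propose
\[
B_L^{-1}=\frac12\begin{pmatrix}1&-L_1\\1&L_1\end{pmatrix},
\]
again verified by multiplication. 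This gives $B_L^{-1}\binom{1}{J}=\tfrac12\binom{1-L_1J}{1+L_1J}$.

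Write $a=f(z^{K_1},w^{L_1})$, $b=f(z^{K_1},w^{-L_1})$, $c=f(z^{-K_1},w^{L_1})$ and $d=f(z^{-K_1},w^{-L_1})$. Then
\[
f(p,q)=\tfrac14\bigl[(1-IK_1)\,a\,(1-L_1J)+(1-IK_1)\,b\,(1+L_1J)+(1+IK_1)\,c\,(1-L_1J)+(1+IK_1)\,d\,(1+L_1J)\bigr].
\]
We expand this, keeping the left factors $1,IK_1$ to the left and the right factors $1,L_1J$ to the right, since quaternion multiplication does not commute. Grouping by the pair of coefficients $(1,1)$, $(IK_1,1)$, $(1,L_1J)$ and $(IK_1,L_1J)$ gives:
\begin{itemize}
\item the coefficient of $(1,1)$ is $a+b+c+d$;
\item the coefficient of $(-IK_1,1)$ is $a+b-c-d$;
\item the coefficient of $(1,-L_1J)$ is $a-b+c-d$;
\item the coefficient of $(IK_1,L_1J)$ is $a-b-c+d$.
\end{itemize}
These are exactly the four lines of the stated formula.

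The only point needing care is the noncommutative bookkeeping: the inverse matrices must be placed on the correct sides, and the expansion must not move any quaternion factor across the values of $f$. Once this is respected, the argument is a direct computation.
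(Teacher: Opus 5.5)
Your proposal is correct and matches the paper's argument: the paper obtains this corollary simply by specializing Theorem~\ref{simple} to $K_2=-K_1$ and $L_2=-L_1$. Your explicit inverses of $A_K$ and $B_L$ and your order-preserving expansion, which I checked term by term, are correct and supply the routine computation that the paper leaves to the reader.
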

This result shows that, for a two-sided slice regular function defined on an axially symmetric domain, the function values on the entire domain can be determined by its values on one full slices, since the four points $( z^{\pm K_1}, w^{\pm L_1}) $ are all on the slice $\mathbb C_{K_1}\times \mathbb C_{L_1}.$

 \begin{example}
 The right slice hyperholomorphic Cauchy kernel:
       $$S^{-1}_R(p,q):=-(q-\overline{p} )(q^2-2\operatorname{Re} (p)q+|p|^2)^{-1},\ q\notin [p],$$
       which is two-sided
       slice regular, where $[p]=\{\operatorname{Re}(p)+I |\operatorname{Im} (p)|:I\in \mathbb S\}$ denotes the 2-dimensional sphere. 
   \end{example}

\section{Two-sided slice functions}
Next, we introduce the definition of two-sided slice functions and some of their properties, and establish the equivalence between slice function with holomorphic stem functions and the slice regular functions on the axially symmetric domains discussed in Section 2. We also introduce the star-product of two slice regular functions and show that it preserves regularity.

Let $\Omega_1,\Omega_2\subset \mathbb H.$ Denote
     $$\Omega_1^i:=\{x+iy:x+Iy\in \Omega_1 \ \text{for some } I\in \mathbb{S}\}$$
     and
      $$\Omega_2^i:=\{u+iv:u+Jv\in \Omega_2 \ \text{for some } J\in \mathbb{S}\}.$$
Let $z=x+yi\in\mathbb{C}$ and $I\in\mathbb{S}$. Denote $$z^I:=x+yI.$$

\begin{defn}
Let $U_1,U_2\in \tau(\mathbb C)$ be symmetric with respect to the real axis, i.e.,
\[
\overline{U_\ell}=U_\ell,\qquad \ell=1,2.
\] A function 
 $$F= \begin{pmatrix}
    F_{11} & F_{12}\\
    F_{21} & F_{22}
  \end{pmatrix}:U_1\times U_2\rightarrow \mathbb{H}^{2\times 2}$$
is called intrinsic, if for each $\ell,t=1,2$ satisfies

\begin{equation}\label{Stem condition}
    \left\{
\begin{array}{ll}
F_{\ell,t}(\overline  z,w)= (-1)^{\ell-1}F_{\ell,t}(z,w)\\
F_{\ell,t}(z,\overline w)=(-1)^{t-1}F_{\ell,t}(z,w).
\end{array}\right.
\end{equation}
\end{defn}

\begin{remark}
   It is readily seen that \eqref{Stem condition} in the above definition is equivalent to
   \begin{equation}  \label{copynine}
   \left \{ 
    \begin{aligned}
    F(\overline z,w)= \begin{pmatrix}
    1 & 0\\
    0 & -1
  \end{pmatrix} F(z,w), \\
       F( z,\overline w)= F(z,w)\begin{pmatrix}
    1 & 0\\
    0 & -1
  \end{pmatrix}.
  \end{aligned}
   \right.
    \end{equation}
\end{remark}
\begin{prop}\label{nine-9}
Let $\Omega_1,\Omega_2\in\tau_s(\mathbb{H})$ be axially symmetric and a function $F(z,w):\Omega_{1}^i\times  \Omega_{2}^i\rightarrow \mathbb{H}^{2\times 2}$. Then the following statements are equivalent:
\begin{enumerate}[label=(\roman*)]
	\item $F$ is intrinsic.
        
    \item There is a function $f:\Omega_1\times\Omega_2\rightarrow\mathbb{H}$ such that
    \begin{equation}\label{induced}
        f(z^I,w^J):=
        (1,I)\,F(z,w)
        \binom{1}{J},\qquad\forall\ (z^I,w^J)\in\Omega_1\times \Omega_2.
    \end{equation}

    \item There is a unique function $f:\Omega_1\times\Omega_2\rightarrow\mathbb{H}$ such that \eqref{induced} holds.

    \item There is a function $f:\Omega_1\times\Omega_2\rightarrow\mathbb{H}$ such that
    \begin{equation}\label{eq-ind}
        F(z,w)=A_K^{-1}
        \begin{pmatrix}
            f(z^{K_1},w^{L_1}) & f(z^{K_1},w^{L_2})\\
            f(z^{K_2},w^{L_1}) & f(z^{K_2},w^{L_2})
        \end{pmatrix}
        B_L^{-1},
    \end{equation}
    for all $(z,w)\in\Omega_1^i\times \Omega_2^i$ and  $(K_1,K_2),(L_1,L_2)\in \mathbb{S}^2_*$.

    \item There is a unique function $f:\Omega_1\times\Omega_2\rightarrow\mathbb{H}$ such that \eqref{eq-ind} holds.
\end{enumerate}

We say that $f$ is the two-sided slice function of $F$ and $F$ is the stem function of $f$ if the above statements hold. And in this case, denote $$\mathcal{I}(F):=f.$$
\end{prop}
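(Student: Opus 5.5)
The plan is to prove the cycle (i)$\Rightarrow$(iii)$\Rightarrow$(ii)$\Rightarrow$(i), and then tie in (iv) and (v) through (ii)$\Leftrightarrow$(iv) and (iv)$\Leftrightarrow$(v). Throughout, axial symmetry of $\Omega_\ell$ guarantees that $z^I\in\Omega_1$ for every $I\in\mathbb S$ whenever $z\in\Omega_1^i$, and likewise $w^J\in\Omega_2$. So every expression written below is evaluated inside the domain.

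\emph{(i)$\Rightarrow$(iii).} The only issue is that \eqref{induced} defines $f$ consistently, since a point $p\in\Omega_1$ admits several representations $p=z^I$. If $p$ is nonreal, its only representations are $z^I$ and $\overline z^{\,-I}$. By \eqref{copynine}, $(1,-I)F(\overline z,w)=(1,-I)\operatorname{diag}(1,-1)F(z,w)=(1,I)F(z,w)$, so both representations give the same value. If $p=z$ is real, then $\overline z=z$, and the first relation of \eqref{Stem condition} forces $F_{2t}(z,w)=0$. Hence $(1,I)F(z,w)=(F_{11},F_{12})$ does not depend on $I$. The same argument on the right, using the second relation of \eqref{copynine}, handles $w^J$. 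So $f$ is well defined, and uniqueness is automatic because \eqref{induced} prescribes every value of $f$. The implication (iii)$\Rightarrow$(ii) is trivial.

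\emph{(ii)$\Rightarrow$(i).} Let $f$ satisfy \eqref{induced}. Since $z^I=\overline z^{\,-I}$, the same computation as above gives
\[
(1,I)\,G\binom{1}{J}=0\qquad\text{for all } I,J\in\mathbb S,\qquad\text{where } G:=F(z,w)-\operatorname{diag}(1,-1)F(\overline z,w).
\]
A $2\times2$ quaternionic matrix $G$ with this property must vanish. To see this, evaluate at $I=\pm K$ and $J=\pm L$ and add and subtract the four resulting identities. This isolates $G_{11}$, $KG_{21}$, $G_{12}L$ and $KG_{22}L$ in turn, and each must therefore be $0$. Applying the same argument in the second variable gives the second relation of \eqref{Stem condition}. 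The case where $z$ is real is covered automatically, since then $\overline z=z$.

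\emph{(ii)$\Leftrightarrow$(iv)$\Leftrightarrow$(v).} Assume (ii) and fix $(K_1,K_2),(L_1,L_2)\in\mathbb S^2_*$. Writing out \eqref{induced} for $I\in\{K_1,K_2\}$ and $J\in\{L_1,L_2\}$ gives the matrix identity $A_K F(z,w) B_L=M$, where $M$ is the matrix of values in \eqref{eq-ind}. Next we show that $A_K$ and $B_L$ are invertible. Right-multiply the second row of $A_K$ by $(K_2-K_1)^{-1}$ after subtracting the first row; this exhibits an explicit inverse, and the same works for $B_L$ with columns. Hence \eqref{eq-ind} holds. Conversely, assume (iv). Given any $I,J\in\mathbb S$, choose $K_1=I$, $L_1=J$ and arbitrary $K_2\neq I$, $L_2\neq J$. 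The $(1,1)$ entry of $A_KFB_L=M$ is then exactly \eqref{induced} at $(z^I,w^J)$, which proves (ii). Any $f$ satisfying \eqref{eq-ind} therefore satisfies \eqref{induced}, and so it is uniquely determined by $F$; this gives (iv)$\Rightarrow$(v), and (v)$\Rightarrow$(iv) is trivial.

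I expect the main obstacle to be the bookkeeping at real points and the non-commutative linear algebra: checking that the two-sided identity $(1,I)G\binom{1}{J}=0$ for all $I,J$ forces $G=0$, and that $A_K^{-1}$ and $B_L^{-1}$ exist with the correct sidedness. I would settle both by the explicit $\pm K$, $\pm L$ averaging and the explicit inverse formulas described above.
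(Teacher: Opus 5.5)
Your proof is correct, but it closes the loop differently from the paper.

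\textbf{How the paper returns to (i).} The paper runs the single cycle (i)$\Rightarrow$(ii)$\Rightarrow$(iii)$\Rightarrow$(iv)$\Rightarrow$(v)$\Rightarrow$(i), and gets back to (i) by a frame-change trick inside (iv). From $A_{-K}=A_K\,\mathrm{diag}(1,-1)$ one gets $A_K^{-1}=\mathrm{diag}(1,-1)A_{-K}^{-1}$. Since $\overline{z}^{K_\ell}=z^{-K_\ell}$, the formula for $F(\overline z,w)$ in the frame $(K_1,K_2)$ is $\mathrm{diag}(1,-1)$ times the formula for $F(z,w)$ in the reflected frame $(-K_1,-K_2)$. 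The right variable is handled the same way using $B_{-L}=\mathrm{diag}(1,-1)B_L$.

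\textbf{How you do it.} You derive (i) directly from (ii), using the nondegeneracy fact that $(1,I)G\binom{1}{J}=0$ for all $I,J\in\mathbb S$ forces $G=0$. You then attach (iv) and (v) to (ii) separately. The two routes are closer than they look: your $\pm K,\pm L$ averaging is exactly multiplication by $A_K^{-1}$ and $B_L^{-1}$ in the frames $(K,-K)$ and $(L,-L)$.

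\textbf{What each route buys.} The paper's version is shorter: it exploits the freedom of frame in (iv) and needs no auxiliary lemma. Yours shows that intrinsicness already follows from the well-definedness of $f$. It also isolates a lemma that gives uniqueness of the stem function of a given $f$. Finally, it is more careful in two places the paper glosses over.
\begin{itemize}
\item \emph{Real points.} The paper's case analysis only lists sign flips of $(y,I)$ and $(v,J)$. It leaves implicit that for $y=0$ (resp.\ $v=0$) the unit $I$ (resp.\ $J$) is arbitrary, which requires $F_{2t}(x,w)=0$ (resp.\ $F_{\ell 2}(z,u)=0$). You handle this explicitly.
\item \emph{Invertibility.} You justify that $A_K$ and $B_L$ are invertible, which the paper simply assumes.
\end{itemize}

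\textbf{A small slip.} In row-reducing $A_K$ you should \emph{left}-multiply the second row by $(K_2-K_1)^{-1}$, since row operations are left multiplications by elementary matrices. This is harmless here because that row is $(0,K_2-K_1)$. You can also simply verify directly that
\[
A_K^{-1}=\begin{pmatrix} K_2(K_2-K_1)^{-1} & -K_1(K_2-K_1)^{-1}\\ -(K_2-K_1)^{-1} & (K_2-K_1)^{-1}\end{pmatrix}
\]
is a two-sided inverse.
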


\begin{proof}
 $(i)\Longrightarrow (ii)$ Let $(z,w)=(x+iy,u+iv)\in \Omega_1^i\times \Omega_2^i,$ 
if there are $(z',w')\in \Omega_1^i\times \Omega_2^i,$
 and $I',J'\in \mathbb{S}$ such that $(z'^{I'},w'^{J'})=(z^I,w^J),$ then $x=x'$ and $u=u'.$ If $\{x,y,u,v\}$ and $\{x',y',u',v'\}$ are not exactly the same, we obtain 
\begin{equation*}
   \left\{\begin{array}{cc}
    y'=-y, I'=-I\\
     v'=v,J'=J.
    \end{array}\right. \quad \text{or} \quad
   \left\{\begin{array}{cc}
      y'=y, I'=I\\
     v'=-v,J'=-J.
    \end{array}\right.\quad \text{or}\quad
    \left\{\begin{array}{cc}
      y'=-y, I'=-I\\
     v'=-v,J'=-J.
    \end{array}\right.
\end{equation*}
In the first case,
\begin{equation*}
\begin{split}
    g(z',{I'},w',{J'})&:=
        (1,-I)\,F(\overline z,w) 
        \binom{1}{J}\\
        &=(1,I)\,\begin{pmatrix}
    1 & 0\\
    0 & -1
  \end{pmatrix}F(\overline z,w)
        \binom{1}{J}\\
        &=(1,I)\,F(z,w)
        \binom{1}{J}\\
        &=g(z,I,w,J).
\end{split}
\end{equation*}
The third equation above is because the function $F$ is intrinsic, i.e. $F$ satisfies formula (\ref{copynine}).
Similarly, in the other two cases, $$g(z,I,w,J)=g(z',{I'},w',{J'})$$
still holds true. Hence $f$ is well-defined on $\Omega_1\times\Omega_2,$ which is a function from $\Omega_1\times\Omega_2$ to $\mathbb{H}.$

$(ii)\Longrightarrow (iii)$ It is obvious.

$(iii)\Longrightarrow (iv)$ For all $(z,w)\in\Omega_1^i\times \Omega_2^i$ and $(K_1,K_2),(L_1,L_2)\in\mathbb{S}^2_*,$ we have
\begin{equation*}
        f(z^{K_{\ell}},w^{L_{t}})=
        (1,K_{\ell})\,F(z,w)
        \binom{1}{L_{t}}\quad \text{for}\ \ell,t=1,2.
    \end{equation*}
It follows that
\begin{equation*}
        \begin{pmatrix}
            f(z^{K_1},w^{L_1}) & f(z^{K_1},w^{L_2})\\
            f(z^{K_2},w^{L_1}) & f(z^{K_2},w^{L_2})
        \end{pmatrix}=A_K
        F(z,w)
       B_L.
    \end{equation*}
Thus
\begin{equation*}
        F(z,w)=A_K^{-1}
        \begin{pmatrix}
            f(z^{K_1},w^{L_1}) & f(z^{K_1},w^{L_2})\\
            f(z^{K_2},w^{L_1}) & f(z^{K_2},w^{L_2})
        \end{pmatrix}
       B_L^{-1}.
    \end{equation*}
    
$(iv)\Longrightarrow (v)$  If there is also a function  $g:\Omega_1\times\Omega_2\rightarrow\mathbb{H}$ such that
    \begin{equation*}
        F(z,w)=A_K^{-1}
        \begin{pmatrix}
            g(z^{K_1},w^{L_1}) & g(z^{K_1},w^{L_2})\\
            g(z^{K_2},w^{L_1}) & g(z^{K_2},w^{L_2})
        \end{pmatrix}
        B_L^{-1},
    \end{equation*}
    for all $(z,w)\in\Omega_1^i\times \Omega_2^i$ and  $(K_1,K_2),(L_1,L_2)\in \mathbb{S}^2_*$.
Then 
\begin{equation}\label{ninecopy}
       \begin{pmatrix}
            g(z^{K_1},w^{L_1}) & g(z^{K_1},w^{L_2})\\
            g(z^{K_2},w^{L_1}) & g(z^{K_2},w^{L_2})
        \end{pmatrix} =A_K F(z,w)
        B_L.
    \end{equation} 
Substitute \eqref{eq-ind}
into \eqref{ninecopy} we have
\begin{equation}\label{equal}
       \begin{pmatrix}
            g(z^{K_1},w^{L_1}) & g(z^{K_1},w^{L_2})\\
            g(z^{K_2},w^{L_1}) & g(z^{K_2},w^{L_2})
        \end{pmatrix} =\begin{pmatrix}
            f(z^{K_1},w^{L_1}) & f(z^{K_1},w^{L_2})\\
            f(z^{K_2},w^{L_1}) & f(z^{K_2},w^{L_2})
        \end{pmatrix}.
    \end{equation} 
 Thus
\begin{equation*}
        g(z^I,w^J)=
        f(z^I,w^J),\quad  \ \forall\ (z^I,w^J) \in \Omega_1\times\Omega_2.
    \end{equation*}

$(v)\Longrightarrow (i)$ If there is a unique function $f:\Omega_1\times\Omega_2\rightarrow\mathbb{H}$ such that \eqref{eq-ind} holds
    for all $(z,w)\in\Omega_1^i\times \Omega_2^i$ and  $(K_1,K_2),(L_1,L_2)\in \mathbb{S}^2_*$. Denote $A_{-K}:=\begin{pmatrix}
            1 & -K_1\\ 1 & -K_2
        \end{pmatrix},$ and $B_{-L}:= \begin{pmatrix}
            1 & 1\\- L_1& -L_2
        \end{pmatrix},$
then
\begin{equation*}
    \begin{split}
        F(\overline z,w)=&A_K^{-1}
        \begin{pmatrix}
            f(\overline z^{K_1},w^{L_1}) & f(\overline z^{K_1},w^{L_2})\\
            f(\overline z^{K_2},w^{L_1}) & f(\overline z^{K_2},w^{L_2})
        \end{pmatrix}
       B_L^{-1}\\
        =&A_K^{-1}
        \begin{pmatrix}
            f(z^{-K_1},w^{L_1}) & f( z^{-K_1},w^{L_2})\\
            f( z^{-K_2},w^{L_1}) & f( z^{-K_2},w^{L_2})
        \end{pmatrix}
        B_L^{-1}\\
        =&\begin{pmatrix}
            1 & 0\\ 0 & -1
        \end{pmatrix}{A_{-K}}^{-1}
        \begin{pmatrix}
            f(z^{-K_1},w^{L_1}) & f( z^{-K_1},w^{L_2})\\
            f( z^{-K_2},w^{L_1}) & f( z^{-K_2},w^{L_2})
        \end{pmatrix}
        B_L^{-1}\\
        =&\begin{pmatrix}
            1 & 0\\ 0 & -1
        \end{pmatrix}F(z,w).
    \end{split}
\end{equation*}
Similarly, we obtain
$$F(z,\overline w)=F(z,w)\begin{pmatrix}
            1 & 0\\ 0 & -1
        \end{pmatrix}. $$
Thus $F$ satisfies \eqref{copynine}, which is intrinsic.
\end{proof}
Since quaternionic multiplication is non-commutative, the complex structures associated with the two variables need to be distinguished. To this end, we introduce two independent complex tensor factors and set
$$
\mathcal{A}:=\mathbb{H}^{2\times 2}\otimes\mathbb{C}\otimes\mathbb{C}.
$$
Consider the linear isomorphism
$$
\psi:\mathbb{H}^{2\times2}\longrightarrow\mathcal{A}
$$
defined by
$$
\psi\left(
\begin{array}{ll}
\mu_{11} & \mu_{12}\\
\mu_{21} & \mu_{22}
\end{array}
\right)
=
\mu_{11}\otimes1\otimes1
+\mu_{12}\otimes1\otimes i
+\mu_{21}\otimes i\otimes1
+\mu_{22}\otimes i\otimes i.
$$

The following lemma is immediate.
\begin{lem}\label{tensor product}
    Let $\mu=\left(\begin{array}{ll}\mu_{11} & \mu_{12} \\ \mu_{21} & \mu_{22}\end{array}\right)\in \mathbb{H}^{2\times 2},$ then
$$(1\otimes i\otimes 1)\psi (\mu)=\psi \left(\begin{pmatrix}
    0& -1\\
    1 & 0
  \end{pmatrix}\mu\right)$$
 and
 $$(1\otimes 1\otimes i)\psi (\mu)=\psi \left(\mu \begin{pmatrix}
    0& 1\\
    -1 & 0
  \end{pmatrix}\right).$$
\end{lem}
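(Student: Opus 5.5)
The plan is to compute both sides directly: expand the left-hand side using $\psi$ and the multiplication in $\mathcal A$, then compare coefficients with $\psi$ of the matrix product. Since $\mathcal{A}=\mathbb{H}^{2\times 2}\otimes\mathbb{C}\otimes\mathbb{C}$ is a tensor product, multiplication acts factorwise. Left multiplication by $1\otimes i\otimes 1$ leaves the $\mathbb{H}^{2\times2}$-coefficient unchanged, since the first factor is $1$. It only multiplies the middle $\mathbb{C}$-factor by $i$, and similarly $1\otimes1\otimes i$ acts only on the last factor. Since $\psi$ is linear and sends the four entries $\mu_{\ell t}$ to the four basis elements $1\otimes1\otimes1$, $1\otimes1\otimes i$, $1\otimes i\otimes 1$, $1\otimes i\otimes i$, it is enough to read off the new coefficient of each basis element and check that it is the corresponding entry of the claimed matrix.

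For the first identity I use $i\cdot 1=i$ and $i\cdot i=-1$ in the middle factor:
\begin{equation*}
(1\otimes i\otimes 1)\psi(\mu)=\mu_{11}\otimes i\otimes 1+\mu_{12}\otimes i\otimes i-\mu_{21}\otimes 1\otimes 1-\mu_{22}\otimes 1\otimes i .
\end{equation*}
This is $\psi$ of the matrix with rows $(-\mu_{21},-\mu_{22})$ and $(\mu_{11},\mu_{12})$. On the other side, $\begin{pmatrix}0&-1\\1&0\end{pmatrix}\mu$ has exactly these rows.

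For the second identity, the same computation in the last factor gives
\begin{equation*}
(1\otimes 1\otimes i)\psi(\mu)=\mu_{11}\otimes 1\otimes i-\mu_{12}\otimes 1\otimes 1+\mu_{21}\otimes i\otimes i-\mu_{22}\otimes i\otimes 1 .
\end{equation*}
This is $\psi$ of the matrix with rows $(-\mu_{12},\mu_{11})$ and $(-\mu_{22},\mu_{21})$, and that matrix equals $\mu\begin{pmatrix}0&1\\-1&0\end{pmatrix}$.

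There is no real obstacle. The one point to state explicitly is the multiplication convention: $(1\otimes a\otimes b)(\nu\otimes c\otimes d)=\nu\otimes ac\otimes bd$ for $\nu\in\mathbb H^{2\times2}$ and $a,b,c,d\in\mathbb C$. Because $\mathbb C$ is commutative and the first factor is $1$, the noncommutativity of $\mathbb H$ never enters.
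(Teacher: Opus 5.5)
Your computation is correct and is exactly the direct expansion the paper has in mind when it calls the lemma immediate; both identities check entry by entry. One cosmetic point: the coefficients $\mu_{\ell t}$ lie in $\mathbb H$, so the algebra is really $\mathbb H\otimes_{\mathbb R}\mathbb C\otimes_{\mathbb R}\mathbb C$ (as the paper itself writes later when defining $T_1,T_2$), and your multiplication rule should be stated for $\nu\in\mathbb H$ rather than $\nu\in\mathbb H^{2\times 2}$.
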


This allows us to formulate the Cauchy--Riemann equations for quaternion-valued functions of two complex variables as follows.
\begin{defn}\label{Fholo}
Let \(U_1,U_2\in\tau(\mathbb{C})\) be axially symmetric. A real differential function
$$
F=
\begin{pmatrix}
F_{11}&F_{12}\\
F_{21}&F_{22}
\end{pmatrix}
:U_1\times U_2\longrightarrow\mathbb{H}^{2\times2}
$$
is called holomorphic if, for every
$
(z,w)=(x+iy,u+iv)\in U_1\times U_2,
$
it satisfies
\begin{equation}\label{nine-4}
\left\{
\begin{aligned}
\frac12\left(
\frac{\partial}{\partial x}
+(1\otimes i\otimes1)\frac{\partial}{\partial y}
\right)\psi(F(z,w))&=0,\\
\frac12\left(
\frac{\partial}{\partial u}
+(1\otimes1\otimes i)\frac{\partial}{\partial v}
\right)\psi(F(z,w))&=0.
\end{aligned}
\right.
\end{equation}
\end{defn}

\begin{remark}
   It is easily verified using Lemma \ref{tensor product} that \eqref{nine-4} is equivalent to the components of $F$ satisfying the following Cauchy-Riemann equations:
\begin{equation}
     \left\{\begin{array}{cc}
      \frac{\partial}{\partial x}F_{1\ell }(z,w)=\frac{\partial}{\partial y}F_{2\ell }(z,w)\\ \\
       \frac{\partial}{\partial y}F_{1\ell }(z,w)=-\frac{\partial}{\partial x}F_{2\ell }(z,w)
    \end{array}\right.\quad \text{and}\quad  \left\{\begin{array}{cc}
      \frac{\partial}{\partial u}F_{\ell 1}(z,w)=\frac{\partial}{\partial v}F_{\ell 2}(z,w)\\ \\
       \frac{\partial}{\partial v}F_{\ell 1}(z,w)=-\frac{\partial}{\partial u}F_{\ell 2}(z,w)
    \end{array} \right.
\end{equation}
    for $\ell=1,2.$
\end{remark}

\begin{prop}\label{F-psi}
    Let $U_1,U_2\in\tau(\mathbb{C})$ be axially symmetric, let $F,\ G:U_1\times U_2\rightarrow \mathbb{H}^{2\times 2}$ are holomorphic functions. Then the function
   $\psi^{-1}(\psi(F)\psi(G))$ is also holomorphic on $U_1\times U_2.$
\end{prop}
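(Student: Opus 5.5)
The plan is to work entirely inside the algebra $\mathcal{A}=\mathbb{H}^{2\times 2}\otimes\mathbb{C}\otimes\mathbb{C}$. Here $\mathcal{A}$ carries the tensor product multiplication $(a\otimes z\otimes w)(b\otimes z'\otimes w')=ab\otimes zz'\otimes ww'$, with whatever real-bilinear associative product on $\mathbb{H}^{2\times 2}$ is implicit in the statement; its unit is denoted $1$. The key observation is that the two operators appearing in \eqref{nine-4}, namely left multiplication by $1\otimes i\otimes 1$ and by $1\otimes 1\otimes i$, are multiplications by \emph{central} elements of $\mathcal{A}$. Indeed, for a simple tensor $b\otimes z'\otimes w'$,
$$(1\otimes i\otimes 1)(b\otimes z'\otimes w')=b\otimes iz'\otimes w'=(b\otimes z'\otimes w')(1\otimes i\otimes 1),$$
because $\mathbb{C}$ is commutative and $1$ commutes with every $b$. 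This extends to all of $\mathcal{A}$ by real linearity, and the same holds for $1\otimes 1\otimes i$.

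First I would note regularity. Since $\psi$ is a real-linear isomorphism of finite-dimensional spaces and the multiplication on $\mathcal{A}$ is real bilinear, $H:=\psi^{-1}(\psi(F)\psi(G))$ is real differentiable with continuous partials whenever $F$ and $G$ are. Moreover, $\psi(H)=\psi(F)\psi(G)$ by definition, so it suffices to check \eqref{nine-4} directly on the product $\psi(F)\psi(G)$.

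Next I would apply the Leibniz rule, which is valid because the product is bilinear:
$$\frac{\partial}{\partial x}\big(\psi(F)\psi(G)\big)=\Big(\frac{\partial}{\partial x}\psi(F)\Big)\psi(G)+\psi(F)\frac{\partial}{\partial x}\psi(G),$$
and similarly for $\partial/\partial y$. Set $\epsilon=1\otimes i\otimes 1$. Using centrality to move $\epsilon$ past $\psi(F)$ in the second term, I get
$$\Big(\frac{\partial}{\partial x}+\epsilon\frac{\partial}{\partial y}\Big)\big(\psi(F)\psi(G)\big)
=\Big[\Big(\frac{\partial}{\partial x}+\epsilon\frac{\partial}{\partial y}\Big)\psi(F)\Big]\psi(G)+\psi(F)\Big[\Big(\frac{\partial}{\partial x}+\epsilon\frac{\partial}{\partial y}\Big)\psi(G)\Big].$$
Both brackets vanish by the first equation of \eqref{nine-4} for $F$ and for $G$. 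The identical argument with $1\otimes1\otimes i$ and the variables $(u,v)$ yields the second equation of \eqref{nine-4}. Hence $H$ is holomorphic on $U_1\times U_2$.

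The only step requiring care is the centrality claim, i.e. making precise that the product on $\mathcal{A}$ is the tensor product algebra structure, so that the $\mathbb{C}$-factors commute with everything. Once this is fixed, the rest is the routine Leibniz computation above. I would also remark that Lemma \ref{tensor product} is not needed for this argument: it only translates the resulting identities back into the componentwise Cauchy--Riemann system for the entries of $H$, should one wish to state the conclusion in that form.
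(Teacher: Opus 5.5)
Your proof is correct and follows the same route as the paper. The paper simply asserts that the operators in \eqref{nine-4} annihilate $\psi(F)\psi(G)$, and your Leibniz-rule computation, which uses the centrality of $1\otimes i\otimes 1$ and $1\otimes 1\otimes i$, is exactly the justification the paper leaves implicit.

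One small correction, which does not affect your argument: the intended algebra is $\mathcal{A}=\mathbb{H}\otimes_{\mathbb{R}}\mathbb{C}\otimes_{\mathbb{R}}\mathbb{C}$, not $\mathbb{H}^{2\times 2}\otimes\mathbb{C}\otimes\mathbb{C}$. The paper itself writes it this way later, and this is what makes $\psi$ an isomorphism, since the entries $\mu_{jk}$ are quaternions.
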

\begin{proof}
    Let $(z,w)=(x+iy,u+i v )\in U_1\times U_2$, since $F,   G$ are holomorphic functions, we can readily obtain
    \begin{equation*}
        \frac{1}{2}\left(\frac{\partial}{\partial x}+(1\otimes i\otimes1)\frac{\partial}{\partial y}\right)(\psi(F(z,w))\psi(G(z,w)))\\
        =0
    \end{equation*}
    and
    $$\frac{1}{2}\left(\frac{\partial}{\partial u}+(1\otimes 1\otimes i)\frac{\partial}{\partial v}\right)(\psi(F(z,w))\psi(G(z,w)))=0. $$
    Thus $\psi^{-1}(\psi(F)\psi(G))$ is holomorphic on $U_1\times U_2.$
\end{proof}

\begin{thm}\label{f-F}
    Let $\Omega_1,\Omega_2\in\tau_s(\mathbb{H})$ be axially symmetric, and $f:\Omega_{1}\times  \Omega_{2}\rightarrow \mathbb{H}$  be a two-sided slice function induced by a stem function $F:\Omega_1^i\times\Omega_2^i\rightarrow\mathbb{H}^{2\times 2}.$ Then $f$ is slice regular if and only if $F$ is holomorphic.
\end{thm}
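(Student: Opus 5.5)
The plan is to compute the two Cauchy--Riemann operators of Definition \ref{weak regular} applied to $f$ directly through the representation \eqref{induced}. Fix $I,J\in\mathbb S$ and write $f(x+Iy,u+Jv)=(1,I)F(z,w)\binom{1}{J}$ with $z=x+iy$, $w=u+iv$. Since $I$ and $J$ are constants on the slice $\Omega_{1,I}\times\Omega_{2,J}$, differentiation passes through the constant row and column vectors. Expanding,
\[
f=F_{11}+IF_{21}+F_{12}J+IF_{22}J .
\]
Smoothness of $f|_{I,J}$ is therefore equivalent to smoothness of $F$. For the ``only if'' direction I will recover $F$ from $f$ via \eqref{eq-ind}, using one fixed choice of $K,L$, say $K=(i,-i)$ and $L=(i,-i)$.

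First operator. Using $I^2=-1$,
\[
(\partial_x+I\partial_y)f=(\partial_xF_{11}-\partial_yF_{21})+I(\partial_xF_{21}+\partial_yF_{11})+\bigl[(\partial_xF_{12}-\partial_yF_{22})+I(\partial_xF_{22}+\partial_yF_{12})\bigr]J .
\]
Second operator. Right multiplication by $J$ gives
\[
(\partial_u+\partial_v R_J)f=(\partial_uF_{11}-\partial_vF_{12})+(\partial_uF_{12}+\partial_vF_{11})J+I\bigl[(\partial_uF_{21}-\partial_vF_{22})+(\partial_uF_{22}+\partial_vF_{21})J\bigr].
\]
The remark after Definition \ref{Fholo} shows that the bracketed combinations are exactly the Cauchy--Riemann expressions for $F$. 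Hence ``if'' is immediate: holomorphy of $F$ makes every coefficient vanish, so both equations in \eqref{tsr} hold on every slice.

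Converse. Suppose $f$ is slice regular. Each coefficient above, such as $a=\partial_xF_{11}-\partial_yF_{21}$ evaluated at $(z,w)$, does not depend on $I$ or $J$. So for fixed $(z,w)$ the first expression has the form
\[
a+Ib+cJ+IdJ=(1,I)\begin{pmatrix}a&c\\ b&d\end{pmatrix}\binom{1}{J},
\]
and it vanishes for all $I,J\in\mathbb S$. Choose $I\in\{K_1,K_2\}$ and $J\in\{L_1,L_2\}$ with $(K_1,K_2),(L_1,L_2)\in\mathbb S^2_*$. This yields
\[
A_K\begin{pmatrix}a&c\\ b&d\end{pmatrix}B_L=0 ,
\]
and invertibility of $A_K$ and $B_L$ (already used in Proposition \ref{nine-9}) forces $a=b=c=d=0$. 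Running the same argument on the second expression gives the remaining equations, so $F$ is holomorphic.

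The main subtlety is a domain issue. The identity must hold at every $(z,w)\in\Omega_1^i\times\Omega_2^i$, including points with $y<0$ or $v<0$, and for all $I,J$. This is fine because axial symmetry makes $(z^I,w^J)\in\Omega_1\times\Omega_2$ for every $I,J$ whenever $(z,w)\in\Omega_1^i\times\Omega_2^i$, and slice-openness guarantees that the partial derivatives make sense.

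Points with $y=0$ also need attention, since there all $z^I$ coincide. The derivative of $f|_{I,J}$ at such points is still a slice derivative, and the coefficient identity extends by continuity of the partial derivatives from $y\ne0$. I would mention this explicitly: intrinsicness forces $F_{21}(x,w)=0$ at real $z$, but the derivative relations remain valid. Alternatively, the algebraic independence argument works pointwise as well, because the expression is still evaluated for varying $I$ at the real point.
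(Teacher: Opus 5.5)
Your proof is correct and is essentially the paper's argument. The identity $(\partial_x+I\partial_y)f(z^I,w^J)=(1,I)(F_x+\sigma F_y)(z,w)\binom{1}{J}$, which you write out componentwise, is exactly what the paper uses for ``$\Leftarrow$''. Your step of evaluating at $I\in\{K_1,K_2\}$, $J\in\{L_1,L_2\}$ and then inverting $A_K,B_L$ is the same identity the paper reaches for ``$\Rightarrow$'' by pushing $\sigma$ through $A_K^{-1}$ in $F=A_K^{-1}M_fB_L^{-1}$. Your extra discussion of points with $y=0$ is unnecessary: $f|_{I,J}=(1,I)F\binom{1}{J}$ holds on the whole slice, including real points, so it can be differentiated pointwise there.
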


\begin{proof}
   ``$\Rightarrow$'' Let $(z,w)=(x+iy,u+iv)\in \Omega_1^i\times \Omega_2^i$ and $(K_1,K_2),(L_1,L_2)\in \mathbb{S}^2_*.$ By Proposition \ref{nine-9},
    \begin{equation}\label{nine_1}
         F(z,w)=A_K^{-1}
        \begin{pmatrix}
            f(z^{K_1},w^{L_1}) & f(z^{K_1},w^{L_2})\\ 
            f(z^{K_2},w^{L_1}) & f(z^{K_2},w^{L_2})
        \end{pmatrix}
        B_L^{-1}
    \end{equation}
    is real differential, since $f$ is.
         Let $\sigma$ be the matrix  $\begin{pmatrix}
  0  & -1\\
    1 & 0
  \end{pmatrix} ,$
according to Lemma \ref{tensor product},
\begin{equation}\label{nine_2}
\begin{split}
    &\left(\frac{\partial}{\partial x}+(1\otimes i\otimes1)\frac{\partial}{\partial y}\right)\psi(F(z,w))=
   \psi\left(F_x(z,w) + \sigma F_y(z,w)\right),
\end{split}
\end{equation}
where $F_x(z,w):=\frac{\partial}{\partial x}F(z,w),$ and $ F_y(z,w):=\frac{\partial}{\partial y}F(z,w).$ 
It follows from \eqref{nine_1} that
\begin{equation}\label{nine_33}
   F_x(z,w)= A_K^{-1}
        \begin{pmatrix}
            f_x(z^{K_1},w^{L_1}) & f_x(z^{K_1},w^{L_2})\\ 
            f_x(z^{K_2},w^{L_1}) & f_x(z^{K_2},w^{L_2})
        \end{pmatrix}
        B_L^{-1},
\end{equation}
and
\begin{equation}\label{nine_3}
    \begin{split}
       \sigma F_y(z,w)
       =& \sigma A_K^{-1}
        \begin{pmatrix}
            f_y(z^{K_1},w^{L_1}) & f_y(z^{K_1},w^{L_2})\\ 
            f_y(z^{K_2},w^{L_1}) & f_y(z^{K_2},w^{L_2})
        \end{pmatrix}
    B_L^{-1}\\=&A_K^{-1}\begin{pmatrix}
    K_1&0\\0&K_2
\end{pmatrix} \begin{pmatrix}
            f_y(z^{K_1},w^{L_1}) & f_y(z^{K_1},w^{L_2})\\ 
            f_y(z^{K_2},w^{L_1}) & f_y(z^{K_2},w^{L_2})
        \end{pmatrix}
    B_L^{-1}\\
       =&A_K^{-1}
        \begin{pmatrix}
           K_1 f_y(z^{K_1},w^{L_1}) & K_1f_y(z^{K_1},w^{L_2})\\ 
            K_2f_y(z^{K_2},w^{L_1}) & K_2f_y(z^{K_2},w^{L_2})
        \end{pmatrix}
       B_L^{-1},
    \end{split}
\end{equation}
where $f_x$ and $f_y$ are the partial derivatives of $f$ w.r.t $x$ and $y$, respectively.  Denote 
$$\overline \partial_{K_{\ell}}^L:=\frac{1}{2}\left(\frac{\partial}{\partial x}
    +K_{\ell}\frac{\partial}{\partial y}\right)\quad  \text{and}\quad \overline \partial_{L_{\ell}}^R:=\frac{1}{2}\left(\frac{\partial}{\partial u}
    +\frac{\partial}{\partial v}R_{L_{\ell}}\right)$$ for $ \ell=1,2.$
Substitute \eqref{nine_33}and \eqref{nine_3} into \eqref{nine_2}, we obtain
\begin{equation}\label{nine_4}
    \begin{split}
 &\frac{1}{2}\left(\frac{\partial}{\partial x}+(1\otimes i\otimes1)\frac{\partial}{\partial y}\right)\psi(F(z,w))\\
    =&\psi \left [A_K^{-1}\begin{pmatrix}
            \overline \partial_{K_1}^Lf(z^{K_1},w^{L_1}) & \overline \partial_{K_1}^Lf(z^{K_1},w^{L_2})\\ \\
            \overline \partial_{K_2}^Lf(z^{K_2},w^{L_1}) & \overline \partial_{K_2}^Lf(z^{K_2},w^{L_2})
        \end{pmatrix}B_L^{-1} \right ].
\end{split}
\end{equation}
Since $f\in \mathcal{SR}(\Omega_1\times \Omega_2)$, it follows that
\begin{equation}\label{4plane}
    \overline \partial_{K_{\ell}}^Lf(z^{K_{\ell}},w^{L_{\ell}})=0\quad \text{and} \quad \overline \partial_{L_{\ell}}^Rf(z^{K_{\ell}},w^{L_{\ell}})=0,\ \ \ell=1,2.
\end{equation}
Combining \eqref{nine_4} and \eqref{4plane}, we have 
$$\frac{1}{2}\left(\frac{\partial}{\partial x}+(1\otimes i\otimes1)\frac{\partial}{\partial y}\right)\psi(F(z,w))=0.$$
Similarly, we obtain
\begin{equation}\label{F-R}
\begin{split}
    &\frac{1}{2}\left(\frac{\partial}{\partial u}+(1\otimes 1\otimes i)\frac{\partial}{\partial v}\right)\psi(F(z,w))\\
    &=\psi \left [A_K^{-1}\begin{pmatrix}
            \overline \partial_{L_1}^Rf(z^{K_1},w^{L_1}) & \overline \partial_{L_1}^Rf(z^{K_1},w^{L_2})\\\\
           \overline \partial_{L_2}^Rf(z^{K_2},w^{L_1}) & \overline \partial_{L_2}^Rf(z^{K_2},w^{L_2})
        \end{pmatrix}B_L^{-1} \right ]\\
        &=0.
\end{split}
\end{equation}
Therefore, \(F\) satisfies \eqref{nine-4}, which implies that \(F\) is holomorphic.

 ``$\Leftarrow$ '' let $f=\mathcal{I}(F)$ and $F$ is holomorphic, then \eqref{nine-4} follows. From Proposition \ref{nine-9}, we obtain
$$f(z^I,w^J)=(1,I)F(z,w)\binom{1}{J}$$
for all $(z,w)=(x+iy,u+iv)\in \Omega_1^i\times \Omega_2^i.$ Note that
$I(1,I)=(1,I)\sigma,$ hence
\begin{equation*}
    \begin{split}
         &\frac{1}{2}\left(\frac{\partial}{\partial x}
    +I\frac{\partial}{\partial y}\right)f(z^I,w^J)\\
    =&\frac{1}{2}\left[(1,I)F_x(z,w)\binom{1}{J}+I(1,I)F_y(z,w)\binom{1}{J}\right]\\
    =&\frac{1}{2}(1,I)\left (F_x(z,w)+\sigma F_y(z,w)\right)\binom{1}{J}\\
    =&\frac{1}{2}(1,I)\left (F_x(z,w)+\psi^{-1}((1\otimes i\otimes 1)F_y(z,w))\right)\binom{1}{J}
    \\
    =&(1,I)\psi ^{-1} \left[\frac{1}{2}\left(\frac{\partial}{\partial x}
    +(1\otimes i\otimes1))\frac{\partial}{\partial y}\right)\psi( F(z,w)) \right]\binom{1}{J}\\
    =& 0,
    \end{split}
\end{equation*}
where the third and last equalities follow from Lemma \ref{tensor product} and \eqref{nine-4}, respectively.
Similarly, we can prove
\begin{equation*}
  \frac{1}{2}\left(\frac{\partial}{\partial u}
    +\frac{\partial}{\partial v}R_J\right)f(z^I,w^J)= 0.
\end{equation*}
Hence $f$ satisfies \eqref{tsr}, implying that $f$ is a two-sided slice regular functions.
\end{proof}

\begin{corollary}\label{nine-slice}

     Let $\Omega_1,\Omega_2$ be axially symmetric domains in $\tau_s(\mathbb{H})$, and $f: \Omega_1 \times \Omega_2 \rightarrow \mathbb{H}$  be a two-sided slice regular function. Let $(z,w)\in \Omega_1^i\times \Omega_2^i$, and $(K_1, K_2)$, $(L_1, L_2 )\in \mathbb{S}^2_*$. Let
     \begin{equation}\label{C_fdef}
C_f(z, w)=A_K^{-1}\left(\begin{array}{ll}
f\left(z^{K_1}, w^{L_1}\right) & \left.f(z^{K_1}, w^{L_2}\right) \\
f\left(z^{K_2}, w^{L_1}\right) & f\left(z^{K_2}, w^{L_2}\right)
\end{array}\right) B_L^{-1} .
\end{equation}
Then $C_f(z, w)$ does not depend on the choice of $K_1, K_2, L_1, L_2$. Moreover, $C_f(z, w)$ is the two-sided holomorphic stem function of $f.$
\end{corollary}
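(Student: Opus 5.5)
The plan is to get independence of $C_f$ from the representation formula, Theorem \ref{simple}, and then to read off the stem-function and holomorphy properties from Proposition \ref{nine-9} and Theorem \ref{f-F}.

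\emph{Step 1: independence of the choice.} Fix $(z,w)=(x+iy,u+iv)\in\Omega_1^i\times\Omega_2^i$. Let $(K_1,K_2),(L_1,L_2)$ and $(K_1',K_2'),(L_1',L_2')$ be two choices in $\mathbb S^2_*$. Apply Theorem \ref{simple} with the unprimed pair to each of the four points $(z^{K_\ell'},w^{L_t'})$, $\ell,t=1,2$. This expresses $f(z^{K_\ell'},w^{L_t'})$ as
$$(1,K_\ell')\,C_f(z,w)\binom{1}{L_t'},$$
where $C_f$ is the matrix built from the unprimed units. Stacking the four identities into matrix form gives
$$M_f'=A_{K'}\,C_f(z,w)\,B_{L'},$$
where $M_f'$ is the matrix of values at the primed points. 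Hence $A_{K'}^{-1}M_f'B_{L'}^{-1}=C_f(z,w)$, which is exactly the claimed independence.

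Some care is needed when $y<0$ or $v<0$, that is, when $z^{K}$ is read as $x+yK$ with a negative coefficient. Theorem \ref{simple} is stated for points $x+Iy$ with arbitrary real $y$, and $z^{I}=x+yI$ covers this case. Axial symmetry guarantees that all the points involved lie in $\Omega_1\times\Omega_2$.

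\emph{Step 2: $C_f$ is a stem function of $f$.} Since $C_f$ is well defined, Theorem \ref{simple} reads
$$f(z^I,w^J)=(1,I)\,C_f(z,w)\binom{1}{J}\qquad\text{for all } (z^I,w^J)\in\Omega_1\times\Omega_2.$$
This is \eqref{induced}, so statement (ii) of Proposition \ref{nine-9} holds with $F=C_f$. That proposition then gives that $C_f$ is intrinsic and that $f=\mathcal I(C_f)$. Equivalently, one can observe directly that \eqref{C_fdef} is \eqref{eq-ind} holding for all choices of units.

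\emph{Step 3: holomorphy.} Since $f$ is slice regular and is induced by the stem function $C_f$, Theorem \ref{f-F} gives that $C_f$ is holomorphic.

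The only delicate point is Step 1. One has to check that the matrix bookkeeping matches: left factors $(1,K_\ell')$ stack into $A_{K'}$, and right factors $\binom{1}{L_t'}$ stack as columns into $B_{L'}$. One also has to check the sign conventions for $z^{K}$ with negative imaginary part. Beyond that, everything is a direct application of the earlier results.
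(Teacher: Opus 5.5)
Your proposal is correct and essentially the paper's proof. Independence comes from applying the representation formula at the four primed points to get $M_f'=A_{K'}C_f(z,w)B_{L'}$, and holomorphy then follows from Theorem~\ref{f-F}. The only difference is cosmetic: you get intrinsicness of $C_f$ by citing Proposition~\ref{nine-9} (ii)$\Rightarrow$(i). The paper instead checks the stem condition directly via $A_K^{-1}=\mathrm{diag}(1,-1)A_{-K}^{-1}$ and $B_L^{-1}=B_{-L}^{-1}\mathrm{diag}(1,-1)$, which is the same computation as the step (v)$\Rightarrow$(i) in that proposition.
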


\begin{proof}

Let  $(z,w) =( x+iy,u+iv)\in \Omega_1^i\times  \Omega_2^i,$ and 
$$ \lambda :=A_K^{-1}\left(\begin{array}{ll}
f\left(z^{K_1}, w^{L_1}\right) & \left.f(z^{K_1}, w^{L_2}\right) \\
f\left(z^{K_2}, w^{L_1}\right) & f\left(z^{K_2}, w^{L_2}\right)
\end{array}\right) B_L^{-1}.$$
 Since $f$ is two-sided slice regular, by formula (\ref{representation formula}), for any $I,J\in \mathbb S,$ and $(K_1,K_2),(L_1,L_2)\in \mathbb{S}_*^2$, we have
\begin{equation}\label{C_f}
        f(z^I, w^J)=\left(\begin{array}{ll}1 & I\end{array}\right) \lambda \binom{1}{J}.
\end{equation}
Let $ (K_1',K_2'),(L_1',L_2')\in \mathbb{S}^2_*,$ from above formula, it follows that
$$\left(\begin{array}{ll}
f\left(z^{K_1'}, w^{L_1'}\right) & \left.f(z^{K_1'}, w^{L_2'} \right) \\
f\left(z^{K_2'}, w^{L_1'} \right) & f\left(z^{K_2'}, w^{L_2'}\right)
\end{array}\right) =A_{K'}\lambda
 B_{L'}. $$
Then
$$
{A_{K'}}^{-1}\left(\begin{array}{ll}
f\left(z^{K_1'}, w^{L_1'}\right) & \left.f(z^{K_1'}, w^{L_2'} \right) \\
f\left(z^{K_2'}, w^{L_1'} \right) & f\left(z^{K_2'}, w^{L_2'}\right)
\end{array}\right) {B_{L'}}^{-1}=\lambda,
$$
which implies that $C_f(z, w)$ does not depend on the choice of $K_1, K_2, L_1, L_2$.
Moreover, based on 
$$A_K^{-1} =\begin{pmatrix}
    1 & 0\\
    0& -1
  \end{pmatrix}{A_{-K}}^{-1}$$
  and
$$B_L^{-1} ={B_{-L}}^{-1}\begin{pmatrix}
    1 & 0\\
    0& -1
  \end{pmatrix},$$
 it is easy to verify that
$$C_f(\overline z, w)=\begin{pmatrix}
    1 & 0\\
    0& -1
  \end{pmatrix}C_f(z, w)\quad \text{and}\quad C_f( z,\overline w)=C_f(z, w)\begin{pmatrix}
    1 & 0\\
    0& -1
  \end{pmatrix}. $$
 Thus $C_f(z, w)$ satisfies (\ref{Stem condition}), which is a stem function and 
 $f$ is induced by $C_f(z, w)$ due to \eqref{C_f}. Since $f$ is two-sided slice regular, by Theorem \ref{f-F}, we have that $C_f(z, w)$ is also holomorphic.
\end{proof}
\begin{remark}
    From Corollary \ref{nine-slice}, we know that a slice regular function on an axially symmetric domain is necessarily a two-sided slice function.
\end{remark}
To study the star-product of stem functions and its compatibility with the stem condition, we introduce the following two real-linear operators.
\begin{defn}
    Define two real linear operators on algebra $\mathcal{A}=\mathbb{H}\otimes_{\mathbb{R}}\mathbb{C}\otimes_{\mathbb{R}}\mathbb{C}$ as follows:
    $$T_1(a\otimes \xi \otimes \eta)=a\otimes \overline \xi \otimes \eta,\quad T_2(a\otimes \xi \otimes \eta)=a\otimes  \xi \otimes \overline\eta.$$
\end{defn}
It is obvious that $T_1$ and $T_2$ are both  algebraic automorphisms on $\mathcal{A}$, and $T_1T_2=T_2T_1.$
\begin{lem}\label{stem-operator}
    The function $F:\Omega_1^i\times \Omega_2^i\rightarrow \mathbb{H}^{2\times 2}$ is a stem function if and only if
    $$\psi(F(\overline z,w))=T_1(\psi(F(z,w))),\quad  \psi(F( z,\overline w))=T_2(\psi(F(z,w))).$$
\end{lem}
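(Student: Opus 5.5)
The argument is a direct computation: we compare the intrinsic condition \eqref{copynine} with the action of $T_1,T_2$ after applying $\psi$. A stem function is by definition an intrinsic function, so by the Remark following the definition it suffices to show two things. First, $\psi\bigl(\begin{pmatrix}1&0\\0&-1\end{pmatrix}\mu\bigr)=T_1(\psi(\mu))$ for every $\mu\in\mathbb{H}^{2\times 2}$. Second, $\psi\bigl(\mu\begin{pmatrix}1&0\\0&-1\end{pmatrix}\bigr)=T_2(\psi(\mu))$ for every $\mu\in\mathbb{H}^{2\times 2}$. Once these two identities hold, the equivalence follows from the injectivity of $\psi$, which is a linear isomorphism.

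To verify the first identity, write $\mu=(\mu_{\ell t})$. Then
\[
\psi(\mu)=\mu_{11}\otimes1\otimes1+\mu_{12}\otimes1\otimes i+\mu_{21}\otimes i\otimes1+\mu_{22}\otimes i\otimes i .
\]
The operator $T_1$ conjugates the middle factor, so it fixes the first two terms and changes the sign of the last two. This gives
\[
T_1(\psi(\mu))=\mu_{11}\otimes1\otimes1+\mu_{12}\otimes1\otimes i-\mu_{21}\otimes i\otimes1-\mu_{22}\otimes i\otimes i .
\]
This is exactly $\psi$ of the matrix whose second row is negated, that is, $\psi\bigl(\operatorname{diag}(1,-1)\mu\bigr)$.

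The second identity works the same way. The operator $T_2$ conjugates the last factor, so it negates the terms carrying $\mu_{12}$ and $\mu_{22}$. This is $\psi$ of $\mu$ with its second column negated, i.e. $\psi\bigl(\mu\operatorname{diag}(1,-1)\bigr)$.

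It remains to apply these identities with $\mu=F(z,w)$ and translate.

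\emph{($\Rightarrow$)} If $F$ is a stem function, then \eqref{copynine} holds. Applying $\psi$ to both equations in \eqref{copynine} and using the two identities gives
\[
\psi(F(\overline z,w))=T_1\psi(F(z,w)),\qquad \psi(F(z,\overline w))=T_2\psi(F(z,w)).
\]

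\emph{($\Leftarrow$)} Conversely, suppose the two displayed equations hold. The two identities rewrite their right-hand sides as $\psi\bigl(\operatorname{diag}(1,-1)F(z,w)\bigr)$ and $\psi\bigl(F(z,w)\operatorname{diag}(1,-1)\bigr)$. Applying $\psi^{-1}$ then returns \eqref{copynine}, so $F$ is intrinsic, i.e. a stem function.

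There is no real obstacle here. The only point needing care is bookkeeping: the row index of $\mu$ corresponds to the middle tensor factor and the column index to the last one, consistent with Lemma \ref{tensor product}.
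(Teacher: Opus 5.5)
Your proposal is correct and follows the paper's proof. Both arguments expand $\psi(F)$ in the basis $1\otimes1,\,1\otimes i,\,i\otimes1,\,i\otimes i$, note that $T_1$ (resp.\ $T_2$) negates exactly the $F_{21},F_{22}$ (resp.\ $F_{12},F_{22}$) terms, and compare with \eqref{Stem condition} via the injectivity of $\psi$. Stating this as the identities $T_1\psi(\mu)=\psi(\operatorname{diag}(1,-1)\mu)$ and $T_2\psi(\mu)=\psi(\mu\operatorname{diag}(1,-1))$ simply makes explicit the step the paper leaves implicit.
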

\begin{proof}
    Let $F=\left(\begin{array}{ll}F_{11} & F_{12} \\ F_{21} & F_{22}\end{array}\right),$ then
    $$\psi(F)=F_{11}\otimes 1\otimes 1+F_{12} \otimes 1\otimes i+F_{21}\otimes i\otimes 1+ F_{22}\otimes i\otimes i.$$
Hence
\begin{equation*}
        T_1(\psi (F)) 
        =F_{11} \otimes 1\otimes 1+F_{12}  \otimes 1\otimes i-F_{21} \otimes i\otimes 1- F_{22} \otimes i\otimes i
\end{equation*}
and
\begin{equation*}
  T_2(\psi (F)) 
        =F_{11} \otimes 1\otimes 1-F_{12}  \otimes 1\otimes i+F_{21} \otimes i\otimes 1- F_{22} \otimes i\otimes i.
\end{equation*}
It follows that 
$$ T_1(\psi (F)(z,w))=\psi(F(\overline z,w))\quad \text{and}\quad T_2(\psi (F)(z,w))=\psi(F( z,\overline w))$$
if and only if 
$F$ satisfies formula \eqref{Stem condition}, which is a two-sided stem function.
\end{proof}

 \begin{lem}\label{FG}
     Let $F,G:\Omega_1^i\times \Omega_2^i\rightarrow \mathbb{H}^{2\times 2}$ be two-sided stem  functions, then 
     $$F*G=\psi^{-1}(\psi (F) \psi (G)) $$
     is also a two-sided stem  function on $\Omega_1^i\times \Omega_2^i.$
 \end{lem}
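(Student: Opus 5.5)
The plan is to reduce the claim entirely to Lemma \ref{stem-operator} together with the fact that $T_1$ and $T_2$ are algebra automorphisms of $\mathcal{A}$. First observe that $F*G$ is well defined as a map $\Omega_1^i\times\Omega_2^i\to\mathbb{H}^{2\times2}$: $\psi$ is a real linear isomorphism onto $\mathcal{A}$, and $\psi(F)\psi(G)$ is computed pointwise in $\mathcal{A}$. Also, $\Omega_1^i$ and $\Omega_2^i$ are symmetric under conjugation, since $x+iy\in\Omega_1^i$ comes from some $x+Iy\in\Omega_1$ and $x-iy$ comes from $x+(-I)y$. Hence $F*G$ may be evaluated at $(\overline z,w)$ and $(z,\overline w)$.

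Next, fix $(z,w)$ and apply Lemma \ref{stem-operator} to $F$ and $G$, which are stem functions. This gives
$$\psi(F(\overline z,w))=T_1(\psi(F(z,w))),\qquad \psi(G(\overline z,w))=T_1(\psi(G(z,w))).$$
Then, by the definition of $F*G$ and multiplicativity of $T_1$,
$$\psi((F*G)(\overline z,w))=\psi(F(\overline z,w))\,\psi(G(\overline z,w))=T_1(\psi(F(z,w)))\,T_1(\psi(G(z,w)))=T_1\bigl(\psi((F*G)(z,w))\bigr).$$
The identical argument with $T_2$ and $\overline w$ gives $\psi((F*G)(z,\overline w))=T_2(\psi((F*G)(z,w)))$. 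The converse direction of Lemma \ref{stem-operator} then shows that $F*G$ is a two-sided stem function.

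The only point needing genuine verification is the assertion that $T_1$ and $T_2$ are multiplicative, which the paper calls obvious. I would check it on simple tensors, using that multiplication in $\mathcal{A}$ is factorwise:
$$(a\otimes\xi\otimes\eta)(b\otimes\xi'\otimes\eta')=ab\otimes\xi\xi'\otimes\eta\eta'.$$
Multiplicativity of $T_1$ then follows from $\overline{\xi\xi'}=\overline\xi\,\overline{\xi'}$ in $\mathbb{C}$, and that of $T_2$ likewise, extended by real bilinearity. One must confirm that the intended algebra structure is the tensor-product algebra $\mathbb{H}\otimes\mathbb{C}\otimes\mathbb{C}$, so that $\mathcal{A}\cong\mathbb{H}\otimes\mathbb{C}\otimes\mathbb{C}$ via $\psi$, rather than involving matrix multiplication. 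This is consistent with Lemma \ref{tensor product}, so I expect no real obstacle.
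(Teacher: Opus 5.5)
Your proposal is correct and is essentially the paper's proof. You apply Lemma \ref{stem-operator} to $F$ and $G$, use multiplicativity of $T_1$ and $T_2$ to get $\psi((F*G)(\overline z,w))=T_1(\psi((F*G)(z,w)))$ and the analogous $T_2$ identity, and then invoke the converse direction of Lemma \ref{stem-operator}. Your extra checks are details the paper leaves implicit: that $\mathcal{A}$ must be read as $\mathbb{H}\otimes_{\mathbb{R}}\mathbb{C}\otimes_{\mathbb{R}}\mathbb{C}$ with factorwise product, and that $\Omega_\ell^i$ is conjugation-invariant. For the latter, the clean justification is $x-iy=x+i(-y)$ with $x+(-I)(-y)=x+Iy\in\Omega_1$.
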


\begin{proof}
   By lemma \ref{stem-operator}, we have
\begin{equation*}
   \left\{\begin{array}{cc}
     \psi(F(\overline z,w))=T_1(\psi(F(z,w)))\\ \\
       \psi(F( z,\overline w))=T_2(\psi(F(z,w)))
    \end{array}\right.
and
\left\{\begin{array}{cc}
     \psi(G(\overline z,w))=T_1(\psi(G(z,w)))\\ \\
       \psi(G( z,\overline w))=T_2(\psi(G(z,w))).
    \end{array}\right.
\end{equation*}
Then for any $(z,w)\in \Omega_1^i\times  \Omega_2^i,$
\begin{equation*}
    \begin{split}
     \psi(F*G)(\overline z,w)&= (\psi( F)\psi (G)) (\overline z,w)\\
     &=(T_1\circ \psi(F))(T_1\circ \psi(G))(z,w)\\
     &=T_1\circ (\psi(F)\psi(G))(z,w)\\
     &=T_1\circ (\psi(F*G))(z,w)
    \end{split}
\end{equation*}
and
\begin{equation*}
    \begin{split}
     \psi(F*G)( z,\overline w)&= (\psi( F)\psi (G)) (z,\overline w)\\
     &=(T_2\circ \psi(F))(T_2\circ \psi(G))(z,w)\\
     &=T_2\circ (\psi(F)\psi(G))(z,w)\\
     &=T_2\circ (\psi(F*G))(z,w).
    \end{split}
\end{equation*}
Therefore, $F*G$ is a two-sided stem  function on $\Omega_1^i\times \Omega_2^i$ by Lemma \ref{stem-operator}. 
\end{proof}
\begin{defn}\label{nine12}
     Let $F,\ G:\Omega_1^i\times \Omega_2^i\rightarrow \mathbb{H}^{2\times 2}$ be two-sided stem functions and $f=\mathcal{I}(F),\ g=\mathcal{I}(G).$ The star-product of $f$ and $g$ is given by:
     $$f*g:=\mathcal{I}(F*G).$$
\end{defn}

\begin{thm}
     Let $\Omega_1,\Omega_2\in\tau_s(\mathbb{H})$ be axially symmetric, and $f=\mathcal{I}(F),\ g=\mathcal{I}(G):\Omega_{1}\times  \Omega_{2}\rightarrow \mathbb{H}$  be  two-sided slice regular functions, then $f*g$ is also a  two-sided slice regular functions on $\Omega_{1}\times  \Omega_{2}.$
\end{thm}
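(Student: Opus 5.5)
The plan is to reduce everything to the stem-function level and chain together results already established. Since $f=\mathcal{I}(F)$ and $g=\mathcal{I}(G)$ are two-sided slice regular on the axially symmetric product $\Omega_1\times\Omega_2$, Theorem \ref{f-F} gives directly that the stem functions $F,G:\Omega_1^i\times\Omega_2^i\to\mathbb{H}^{2\times2}$ are holomorphic in the sense of Definition \ref{Fholo}. (If one prefers not to rely on the particular choice of $F,G$, Corollary \ref{nine-slice} identifies them with $C_f,C_g$, which are holomorphic stem functions. Uniqueness in Proposition \ref{nine-9} shows that the stem function inducing $f$ is determined by $f$, so the star-product is well defined.)

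Next I would note two facts about the product $F*G=\psi^{-1}(\psi(F)\psi(G))$. By Lemma \ref{FG} it is again a two-sided stem function, so $\mathcal{I}(F*G)=f*g$ is a well-defined two-sided slice function on $\Omega_1\times\Omega_2$ by Proposition \ref{nine-9}. By Proposition \ref{F-psi} it is holomorphic on $\Omega_1^i\times\Omega_2^i$, since both factors are. Regularity of $F*G$, meaning that it is real differentiable with continuous partials, follows because $\psi$ is a real-linear isomorphism and multiplication in $\mathcal{A}$ is real-bilinear.

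Finally, I would apply the converse direction of Theorem \ref{f-F} to the slice function $f*g$ induced by the holomorphic stem function $F*G$. This yields $f*g\in\mathcal{SR}(\Omega_1\times\Omega_2)$.

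The main point needing care is the holomorphy of the product, i.e.\ Proposition \ref{F-psi}. Its proof rests on a Leibniz rule in $\mathcal{A}$ together with the fact that $1\otimes i\otimes1$ and $1\otimes1\otimes i$ commute with every element of $\mathcal{A}$. This centrality holds because these elements act only on the commutative tensor factors $\mathbb{C}$. Hence
$$\Big(\tfrac{\partial}{\partial x}+(1\otimes i\otimes 1)\tfrac{\partial}{\partial y}\Big)(\psi(F)\psi(G))$$
splits as
$$\Big[\Big(\tfrac{\partial}{\partial x}+(1\otimes i\otimes1)\tfrac{\partial}{\partial y}\Big)\psi(F)\Big]\psi(G)+\psi(F)\Big(\tfrac{\partial}{\partial x}+(1\otimes i\otimes1)\tfrac{\partial}{\partial y}\Big)\psi(G),$$
and both terms vanish. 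The same argument applies in $(u,v)$. I would double-check this centrality, since the multiplication on $\mathcal{A}$ is the tensor product algebra structure. With it in hand, the proof is just the composition of the cited results.
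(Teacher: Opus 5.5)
Your proposal is correct and follows the paper's own proof: Theorem~\ref{f-F} gives holomorphy of $F,G$, Proposition~\ref{F-psi} gives holomorphy of $F*G$, and the converse direction of Theorem~\ref{f-F} then gives regularity of $f*g=\mathcal{I}(F*G)$. Your additional checks are valid and make explicit what the paper leaves implicit: Lemma~\ref{FG} ensures $\mathcal{I}(F*G)$ is defined, and the centrality of $1\otimes i\otimes 1$ and $1\otimes 1\otimes i$ in $\mathcal{A}$ is what makes the Leibniz rule behind Proposition~\ref{F-psi} work.
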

\begin{proof}
Since $f$ and $g$ 
    are two-sided slice regular, it follows from Theorem \ref{f-F} that $F,\ G$ are holomorphic. Due to Proposition \ref{F-psi}, $F*G$ is a holomorphic function. Therefore, $f*g$ is two-sided slice regular 
     by Lemma \ref{f-F}.
\end{proof}
Let \(B(0,r)\) denote the ball in $\mathbb H$ centered at \(0\) with radius \(r\). We have the following result.
\begin{thm}
 Let
$$
f(p,q)=\sum_{\alpha,\beta\in\mathbb N}p^\alpha a_{\alpha\beta}q^\beta,
\qquad
g(p,q)=\sum_{\alpha,\beta\in\mathbb N}p^\alpha b_{\alpha\beta}q^\beta
$$
be two convergent power series in $B(0,r_1)\times B(0,r_2)$. Then their star-product is the power series
$$
(f*g)(p,q)
=
\sum_{\alpha,\beta\in\mathbb N}
p^\alpha
\left(
\sum_{\substack{m+\ell=\alpha\\ n+t=\beta}}
a_{mn}b_{\ell t}
\right)
q^\beta
$$
in $B(0,r_1)\times B(0,r_2)$.
\end{thm}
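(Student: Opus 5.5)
The plan is to compute the stem functions of $f$ and $g$ explicitly, multiply them in $\mathcal A$ via $\psi$, and then read off $f*g=\mathcal I(F*G)$ through Proposition \ref{nine-9}. For $z=x+iy$ write $z^\alpha=\Re(z^\alpha)+i\Im(z^\alpha)$, and set $p=z^I$, $q=w^J$. Then $p^\alpha=\Re(z^\alpha)+I\Im(z^\alpha)=(1,I)\binom{\Re z^\alpha}{\Im z^\alpha}$ and $q^\beta=(\Re w^\beta,\Im w^\beta)\binom{1}{J}$. Consequently
\[
p^\alpha a_{\alpha\beta}q^\beta=(1,I)\,\binom{\Re z^\alpha}{\Im z^\alpha}a_{\alpha\beta}(\Re w^\beta,\Im w^\beta)\binom{1}{J},
\]
so the natural stem function is
\[
F(z,w)=\sum_{\alpha,\beta}\binom{\Re z^\alpha}{\Im z^\alpha}a_{\alpha\beta}(\Re w^\beta,\Im w^\beta),
\]
and $G$ is defined analogously with $b_{\alpha\beta}$.

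Convergence is handled first. Absolute convergence of $\sum|a_{\alpha\beta}|\,|z|^\alpha|w|^\beta$ for $|z|<r_1$ and $|w|<r_2$ follows from the convergence of the series on $B(0,r_1)\times B(0,r_2)$. This is the standard Abel-type argument on real points, applied after shrinking the radii slightly, and it gives absolute and locally uniform convergence of the series for $F$. The function $F$ is intrinsic, because conjugating $z$ flips the sign of $\Im z^\alpha$ and conjugating $w$ flips the sign of $\Im w^\beta$; this is exactly \eqref{Stem condition}. Hence $f=\mathcal I(F)$, and likewise $g=\mathcal I(G)$. Since $B(0,r)^i$ is a disc, it is symmetric with respect to the real axis.

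The key computation takes place in $\mathcal A$. Because the coefficients $\Re z^\alpha$ and $\Im z^\alpha$ are real, we get
\[
\psi\bigl(F(z,w)\bigr)=\sum_{\alpha,\beta} a_{\alpha\beta}\otimes z^\alpha\otimes w^\beta .
\]
To see this, observe that
\[
a\otimes(\Re z^\alpha+i\Im z^\alpha)\otimes(\Re w^\beta+i\Im w^\beta)
\]
expands into precisely the four entries of $\binom{\Re z^\alpha}{\Im z^\alpha}a(\Re w^\beta,\Im w^\beta)$ placed according to the definition of $\psi$. The multiplication in $\mathcal A$ is componentwise: quaternions multiply in the first factor, while the second and third factors are commutative. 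Therefore
\[
\psi(F)\psi(G)=\sum a_{mn}b_{\ell t}\otimes z^{m+\ell}\otimes w^{n+t}=\sum_{\alpha,\beta}c_{\alpha\beta}\otimes z^\alpha\otimes w^\beta,
\]
where $c_{\alpha\beta}=\sum_{m+\ell=\alpha,\ n+t=\beta}a_{mn}b_{\ell t}$. Absolute convergence justifies the rearrangement, and the same absolute-convergence bound shows that $\sum c_{\alpha\beta}p^\alpha q^\beta$ converges on $B(0,r_1)\times B(0,r_2)$. Applying $\psi^{-1}$ shows that $F*G$ is the stem function built from the coefficients $c_{\alpha\beta}$, so by the first step $\mathcal I(F*G)(p,q)=\sum p^\alpha c_{\alpha\beta}q^\beta$.

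The main obstacle is the identification $\psi(F)=\sum a\otimes z^\alpha\otimes w^\beta$, and in particular the bookkeeping that matches the row index to the first $\mathbb C$-factor and the column index to the second. The other delicate point is the convergence and rearrangement argument, which needs an explicit norm on $\mathcal A$ (for instance, sup of the entries) and the Cauchy-product estimate.
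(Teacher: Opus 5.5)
Your proposal is correct and follows the paper's route. You identify $\psi(F)=\sum a_{\alpha\beta}\otimes z^\alpha\otimes w^\beta$ (the paper writes this as $\sum z_1^\alpha a_{\alpha\beta}w_1^\beta$ with $z_1=x+(1\otimes i\otimes 1)y$ and $w_1=u+(1\otimes 1\otimes i)v$), take the Cauchy product in $\mathcal A$, and apply $\mathcal I$; you also supply the explicit stem function, the intrinsic check, and the convergence and rearrangement justification that the paper leaves implicit.
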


\begin{proof}
Let $(z,w)=(x+iy,u+iv)$ be such that $(p,q)=(z^I,w^J)\in \mathbb B(0,r_1)\times \mathbb B(0,r_2)$ for some $I,J\in \mathbb S.$ Let $f=\mathcal{I}(F)$ and $ g=\mathcal{I}(G).$ Then 
$$\psi(F)(z,w)=\sum_{\alpha , \beta \in N} z_1^\alpha a_{\alpha \beta} w_1^\beta\quad \text{and}\quad  \psi(G)(z,w)=\sum_{\alpha , \beta \in N} z_1^\alpha b_{\alpha \beta} w_1^\beta,$$
where $(z_1,w_1)=\left (x+(1\otimes i\otimes 1)y, u+(1\otimes 1\otimes i)v\right ).$
Then 
$$\psi(F)\psi(G)=\sum_{\alpha, \beta \in \mathbb{N}} z_1^\alpha\left(\sum_{\substack{m+\ell=\alpha \\
n+t=\beta}} a_{m n} b_{\ell t}\right) w_1^\beta.$$
From Definition \ref{nine12} and Lemma \ref{FG}, it follows that
$$f * g(p, q)=\mathcal{I}\left (\psi^{-1}(\psi(F)\psi(G))\right )=\sum_{\alpha, \beta \in \mathbb{N}} p^\alpha\left(\sum_{\substack{m+\ell=\alpha \\
n+t=\beta}} a_{m n} b_{\ell t}\right) q^\beta.$$
\end{proof}

\section{Cauchy integral formula}
For slice regular functions of one quaternionic variable, the Cauchy integral formula determines a function on a symmetric slice domain from its values on a single slice. In this section, we establish the corresponding formula for two-sided slice regular functions.
\begin{thm}
  Let $f\in \mathcal{SR}(\Omega_1\times \Omega_2)$ and  $I,J\in \mathbb{S}$. Let $U_1$ and $U_2$ be bounded axially symmetric open subsets of $\Omega_{1,I}$ and $\Omega_{2,J}$ respectively, satisfying
   $$\overline{U}_1\in \Omega_{1,I}\quad \text{and}\quad  \overline{U}_2\in \Omega_{2,J}.$$
   If $\partial U_{1}$ and $\partial U_{2}$  are two finite union of disjoint rectifiable Jordan curves, respectively, then for any $(p,q)=(z^{K_1},w^{L_1})$ with $ K_1,L_1\in \mathbb S,$ and $(z,w)\in U_1^i\times U_2^i,$ 
  $$f(p,q)=\frac{1}{(2\pi)^2}\int_{\partial U_{1}\times { \partial{U_{2}}}}(s-p)^{-*}_lds_I f(s,t)dt_J(t-q)^{-*}_r,$$
  where 
$$(s-p)_{ l}^{-*}=\frac{1}{2} \left (\frac{1}{s-z^I}+\frac{1}{s-z^{-I}}\right ) +\frac{K_1I }{2}\left (\frac{1}{s-z^{-I}}-\frac{1}{s-z^I}\right ),$$
and
$$(t-q)^{-*}_{r}=\frac{1}{2} \left (\frac{1}{t-w^J}+\frac{1}{t-w^{-J}}\right ) +\left (\frac{1}{t-w^{-J}}-\frac{1}{t-w^J}\right )\frac{JL_1}{2}.$$
  
\end{thm}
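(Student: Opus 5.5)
The plan is to reduce the formula to the classical one-variable Cauchy formula on the complex planes $\mathbb C_I$ and $\mathbb C_J$, applied separately in each variable. After that, the representation formula (Corollary \ref{representation formula simple}, or Theorem \ref{simple} with $K_2=-K_1$, $L_2=-L_1$) transports the result from the slice $\mathbb C_I\times\mathbb C_J$ to the arbitrary point $(z^{K_1},w^{L_1})$.

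\textbf{Step 1: the slice $\mathbb C_I\times\mathbb C_J$.} Fix $z^{\pm I}\in U_1$ and $w^{\pm J}\in U_2$; these lie in $U_1,U_2$ because the sets are symmetric under conjugation. For fixed $t\in\Omega_{2,J}$, the map $s\mapsto f(s,t)$ on $\Omega_{1,I}$ satisfies $\frac12(\partial_x+I\partial_y)f=0$. Splitting $\mathbb H=\mathbb C_I\oplus\mathbb C_I I'$ with $I'\perp I$, I write $f=F_1+F_2I'$ with $F_1,F_2$ holomorphic $\mathbb C_I$-valued functions. Then the classical Cauchy formula, with left multiplication by $\mathbb C_I$-scalars, gives
\[
f(\zeta,t)=\frac{1}{2\pi I}\int_{\partial U_1}\frac{1}{s-\zeta}\,ds\,f(s,t),\qquad \zeta\in U_1 .
\]
For fixed $s$, the map $t\mapsto f(s,t)$ satisfies the right Cauchy--Riemann equation. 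Writing $f=G_1+I''G_2$ with $G_1,G_2$ being $\mathbb C_J$-valued, I get
\[
f(s,\eta)=\frac{1}{2\pi}\int_{\partial U_2}f(s,t)\,dt\,\frac{1}{t-\eta}\,J^{-1},\qquad \eta\in U_2 .
\]
Combining the two, by Fubini on the compact product of rectifiable curves and using continuity of $f$ on $\overline U_1\times\overline U_2$, gives
\[
f(\zeta,\eta)=\frac{1}{(2\pi)^2}\int_{\partial U_1\times\partial U_2}\frac{1}{s-\zeta}\,ds_I\,f(s,t)\,dt_J\,\frac{1}{t-\eta}
\]
for $\zeta\in\{z^{\pm I}\}$ and $\eta\in\{w^{\pm J}\}$. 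Here $ds_I=-I\,ds$ and $dt_J=dt\,(-J)$ are the normalizations that absorb the factors $1/I$ and $1/J$. I must check that these conventions match the paper's notation; the sign placement is the point to verify.

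\textbf{Step 2: representation formula.} Apply Corollary \ref{representation formula simple} with the base slice $\mathbb C_I\times\mathbb C_J$ in place of $\mathbb C_{K_1}\times\mathbb C_{L_1}$, and with target point $(z^{K_1},w^{L_1})$. Equivalently, use Theorem \ref{simple} with $A_{(I,-I)}^{-1}=\frac12\begin{pmatrix}1&1\\ I&-I\end{pmatrix}$, up to the correct sign/inverse, and the analogous $B$. This yields
\[
f(z^{K_1},w^{L_1})=\sum_{\epsilon,\delta=\pm}\alpha_\epsilon\, f(z^{\epsilon I},w^{\delta J})\,\beta_\delta,
\]
with left coefficients
\[
\alpha_\pm=\tfrac12(1\mp K_1I),
\]
and right coefficients
\[
\beta_\pm=\tfrac12(1\mp JL_1).
\]
Substituting the four slice integrals from Step 1 and pulling the constants $\alpha_\epsilon$ and $\beta_\delta$ inside the integral then gives left kernel
\[
\sum_\epsilon \alpha_\epsilon(s-z^{\epsilon I})^{-1}
\]
and right kernel
\[
\sum_\delta (t-w^{\delta J})^{-1}\beta_\delta .
\]
These are exactly the stated $(s-p)^{-*}_l$ and $(t-q)^{-*}_r$.

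\textbf{Main obstacle.} The main difficulty is bookkeeping of noncommutativity in Step 2. The coefficient $\alpha_\epsilon$ must stand to the left of $(s-z^{\epsilon I})^{-1}\,ds_I$. This is harmless because $\alpha_\epsilon$ is a constant multiplied on the far left of the integrand. Similarly, $\beta_\delta$ sits on the far right, which is why the kernel $(t-q)^{-*}_r$ carries $JL_1$ on the right. I would verify the coefficient formulas by checking the special cases $K_1=\pm I$ and $L_1=\pm J$, where the kernels reduce to $(s-z^{\pm I})^{-1}$ and $(t-w^{\pm J})^{-1}$. I would also confirm the differential normalizations $ds_I$ and $dt_J$ so that the overall factor is $(2\pi)^{-2}$.
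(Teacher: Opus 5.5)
Your proposal is correct and follows the paper's proof essentially step for step. The paper likewise composes the one-variable left Cauchy formula (with $ds_I=-I\,ds$) and the right Cauchy formula (with $dt_J=-J\,dt$) on the slice $\mathbb C_I\times\mathbb C_J$. It then applies Corollary~\ref{representation formula simple} with base slice $\mathbb C_I\times\mathbb C_J$, whose coefficients are exactly your $\alpha_\pm=\frac12(1\mp K_1I)$ and $\beta_\pm=\frac12(1\mp JL_1)$, and absorbs these into the two kernels.

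One small slip: $A_{(I,-I)}^{-1}=\frac12\begin{pmatrix}1&1\\-I&I\end{pmatrix}$, not $\frac12\begin{pmatrix}1&1\\I&-I\end{pmatrix}$, since the latter would flip the signs of $\alpha_\pm$. The coefficients you actually use are nevertheless the correct ones.
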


\begin{proof} 
  Let $(p,q)=(z^{K_1},w^{L_1})$ with $ K_1,L_1\in \mathbb S,$ and $(z,w)\in U_1^i\times U_2^i.$ Since $f\in \mathcal{SR}(\Omega_1\times \Omega_2)$,  by Corollary \ref{representation formula simple}, we have
\begin{equation}\label{nine-cake}
    \begin{split}
        f(p,q)&=\frac{1}{4}\left[f(z^I,w^J)+f(z^I,w^{-J})+f(z^{-I},w^J)+f(z^{-I},w^{-J})\right]\\
        &-\frac{K_1 I}{4}\left[f(z^I,w^J)+f(z^I,w^{-J})-f(z^{-I},w^J)-f(z^{-I},w^{-J})\right]\\
        &-\frac{1}{4}\left[f(z^I,w^J)-f(z^I,w^{-J})+f(z^{-I},w^J)-f(z^{-I},w^{-J})\right]JL_1\\
        &+\frac{K_1I}{4}\left[f(z^I,w^J)-f(z^I,w^{-J})-f(z^{-I},w^J)+f(z^{-I},w^{-J})\right]JL_1.
    \end{split}
\end{equation}
Denote $(p_1,q_1):=(z^I,w^J),$ for each fixed $q_1,$
$f(p_1,q_1)$ is left slice regular with respect to $p_1$.
By the Cauchy integral formula in slice analysis,
\begin{equation}\label{Cauchy1}
      f(p_1,q_1)=\frac{1}{2 \pi} \int_{\partial U_1} \frac{1}{s-p_1} d s_{I} f(s, q_1),
\end{equation}
where $ds_I=-Ids.$
Notice that the function $f(s, q_1)$ is right slice regular with respect to $q_1$,  we have
\begin{equation}\label{right slice}
f(s, q_1)=\frac{1}{2 \pi} \int_{\partial U_2} f(s, t) d t_{J} \frac{1}{t-q_1},    
\end{equation}
where $dt_J=-Jdt.$
Substituting formula \eqref{right slice} into formula \eqref{Cauchy1}, we obtain
$$f(p_1,q_1)=\frac{1}{(2 \pi)^2} \int_{\partial U_1\times \partial U_2} \frac{1}{s-p_1} d s_I f(s, t) d t_J \frac{1}{t-q_1}.$$
That is,
$$f(z^I,w^J)=\frac{1}{(2 \pi)^2} \int_{\partial U_1\times \partial U_2} \frac{1}{s-z^I} d s_I f(s, t) d t_J \frac{1}{t-w^J}.$$
More generally,
$$f(z^{\pm I},w^{\pm J})=\frac{1}{(2 \pi)^2} \int_{\partial U_1\times \partial U_2} \frac{1}{s-z^{\pm I}} d s_I f(s, t) d t_J \frac{1}{t-w^{\pm J}}.$$
From \eqref{nine-cake}, it follows that
\begin{equation*}
f(p,q)=\frac{1}{(2 \pi)^2} \int_{\partial U_1\times \partial U_2} (s-p)_{ l}^{-*} ds_If(s, t) dt_J (t-q)^{-*}_{ r}.
\end{equation*}
\end{proof}

\begin{remark}
    A natural continuation of this work is the study of two-sided slice regular functions on non-axially symmetric domains. The corresponding extension results, representation formulas, and Cauchy-type integral formulas are currently under investigation.
\end{remark}

\bibliographystyle{plainnat}
\bibliography{mybibfile}

@article{GentiliStruppa2006,
  author  = {Graziano Gentili and Daniele C. Struppa},
  title   = {A new approach to {C}ullen-regular functions of a quaternionic variable},
  journal = {C. R. Math.},
  volume  = {342},
  number  = {10},
  pages   = {741--744},
  year    = {2006},
  doi     = {10.1016/j.crma.2006.03.015}
}

@article{gentili2007new4,
  title     = {A New Theory of Regular Functions of a Quaternionic Variable},
  author    = {Gentili, Gianluca and Struppa, Daniel C.},
  journal   = {Adv. Math.},
  volume    = {216},
  number    = {1},
  pages     = {279--301},
  year      = {2007},
  publisher = {Elsevier}
}

@article{ghiloni2011slice,
  author  = {Ghiloni, Riccardo and Perotti, Alessandro},
  title   = {Slice regular functions on real alternative algebras},
  journal = {Adv. Math.},
  volume  = {226},
  number  = {2},
  pages   = {1662--1691},
  year    = {2011}
}

@book{colombo2018spectral,
  title={Spectral theory on the {S}-spectrum for quaternionic operators},
  author={Colombo, Fabrizio and Gantner, Jonathan and Kimsey, David P and others},
  year={2018},
  publisher={Springer}
}

@article{colombo2007functional,
  author    = {Colombo, Fabrizio and Gentili, Graziano and Sabadini, Irene and Struppa, Daniele C.},
  title     = {A Functional Calculus in a Non Commutative Setting},
  journal   = {Electron. Res. Announc. Math. Sci.},
  volume    = {14},
  pages     = {60--68},
  year      = {2007},
  publisher = {American Institute of Mathematical Sciences}
}

@article{Colombo2009SliceMonogenic,
  author  = {Colombo, Fabrizio and Sabadini, Irene and Struppa, Daniele C.},
  title   = {Slice Monogenic Functions},
  journal = {Isr. J. Math.},
  volume  = {171},
  pages   = {385--403},
  year    = {2009},
  doi     = {10.1007/s11856-009-0055-4}
}

@article{gentili2010regular5,
  title={Regular functions on the space of {C}ayley numbers},
  author={Gentili, Gianluca and Struppa, Daniel C.},
  journal={Rocky Mountain J. Math.},
  pages={225--241},
  year={2010},
  publisher={JSTOR}
}

@article{ColomboSabadini2009,
  author  = {Colombo, Fabrizio and Sabadini, Irene},
  title   = {On Some Properties of the Quaternionic Functional Calculus},
  journal = {Journal of Geometric Analysis},
  volume  = {19},
  number  = {3},
  pages   = {601--627},
  year    = {2009},
  doi     = {10.1007/s12220-009-9075-x}
}

@article{ColomboSabadini2010lr,
  author  = {Colombo, Fabrizio and Sabadini, Irene},
  title   = {On the formulations of the quaternionic functional calculus},
  journal = {Journal of Geometry and Physics},
  volume  = {60},
  number  = {10},
  pages   = {1490--1508},
  year    = {2010},
  doi     = {10.1016/j.geomphys.2010.05.014}
}

@article{ColomboGentiliSabadiniStruppa2010,
  author  = {Colombo, Fabrizio and Gentili, Graziano and Sabadini, Irene and Struppa, Daniele C.},
  title   = {Non commutative functional calculus: bounded operators},
  journal = {Complex Anal. Oper. Theory},
  volume  = {4},
  number  = {4},
  pages   = {821--843},
  year    = {2010},
  doi     = {10.1007/s11785-009-0015-3}
}

@article{Dou202000425,
	Title = {A representation formula for slice regular functions over slice-cones in several variables},
    Author  = {Xinyuan Dou and Guangbin Ren and Irene Sabadini},
	Year = {2023},
	JOURNAL = {Ann. Mat. Pura Appl.},
}

@article{Dou202000426,
	Title = {Extension theorem and representation
    formula in non-axially-symmetric domains for slice regular functions},
    Author = {Xinyuan Dou and Guangbin Ren and Irene Sabadini},
	JOURNAL = {J. Eur. Math. Soc.},
    volume={25},
    number={9},
    pages={ 3665–3694},
    Year = {2023},
}

@article{GhiloniStoppato2024,
  author  = {Ghiloni, Riccardo and Stoppato, Caterina},
  title   = {Quaternionic slice regularity beyond slice domains},
  journal = {Math. Z.},
  volume  = {306},
  number  = {3},
  pages   = {55},
  year    = {2024},
  doi     = {10.1007/s00209-024-03434-7}
}

@article{Perotti2024,
  author  = {Perotti, Alessandro},
  title   = {A Local {C}auchy Integral Formula for Slice-Regular Functions},
  journal = {Comput. Methods Funct. Theory},
  volume  = {24},
  pages   = {185--203},
  year    = {2024},
  doi     = {10.1007/s40315-023-00485-5}
}

@article{ColomboSabadiniStruppa2012,
  author  = {Colombo, Fabrizio and Sabadini, Irene and Struppa, Daniele C.},
  title   = {Algebraic properties of the module of slice regular functions in several quaternionic variables},
  journal = {Indiana Univ. Math. J.},
  volume  = {61},
  number  = {4},
  pages   = {1581--1602},
  year    = {2012},
  doi     = {10.1512/iumj.2012.61.4978}
}

@article{RenYang2020,
  author  = {Ren, Guangbin and Yang, Ting},
  title   = {Slice regular functions of several octonionic variables},
  journal = {Math. Methods Appl. Sci.},
  volume  = {43},
  number  = {9},
  pages   = {6031--6042},
  year    = {2020},
  doi     = {10.1002/mma.6344}
}

@article{GhiloniPerotti2022,
  author  = {Ghiloni, Riccardo and Perotti, Alessandro},
  title   = {Slice regular functions in several variables},
  journal = {Math. Z.},
  volume  = {302},
  number  = {1},
  pages   = {295--351},
  year    = {2022},
  doi     = {10.1007/s00209-022-03066-9}
}

@inproceedings{GhiloniPerotti2012Clifford,
  author    = {Ghiloni, Riccardo and Perotti, Alessandro},
  title     = {Slice regular functions of several {C}lifford variables},
  booktitle = {A{I}{P} {C}onference {P}roceedings},
  volume    = {1493},
  pages     = {734--738},
  year      = {2012},
  publisher = {American Institute of Physics},
  doi       = {10.1063/1.4765569}
}
\end{document}